\DeclareSymbolFontAlphabet{\mathbb}{AMSb}
\DeclareSymbolFontAlphabet{\mathbbol}{bbold}
\theoremstyle{plain}
\newtheorem{theorem}{Theorem}[section]
\newtheorem{proposition}{Proposition}[section]
\newtheorem{lemma}[proposition]{Lemma}
\newtheorem*{corollary*}{Corollary}
\theoremstyle{remark}
\numberwithin{equation}{section}
\renewcommand{\footnoterule}{
  \kern -3pt
  \hrule width 2.5in height 0.4pt
  \kern 3pt
}
\begin{document}
	
\title[Primes in the intersection of two Piatetski--Shapiro sets]
	  {Primes in the intersection of two Piatetski--Shapiro sets}

\author[Xiaotian Li, Wenguang Zhai, Jinjiang Li]{Xiaotian Li \quad \& \quad Wenguang Zhai \quad \& \quad Jinjiang Li}

\address{(Corresponding author) Department of Mathematics, China University of Mining and Technology,
         Beijing, 100083, People's Republic of China}

\email{xiaotian.li.math@gmail.com}


\address{Department of Mathematics, China University of Mining and Technology,
         Beijing, 100083, People's Republic of China}

\email{zhaiwg@hotmail.com}

\address{Department of Mathematics, China University of Mining and Technology,
         Beijing 100083, People's Republic of China}

\email{jinjiang.li.math@gmail.com}

\date{}

\footnotetext[1]{Xiaotian Li is the corresponding author.  \\
  \quad\,\,
{\textbf{Keywords}}: Piatetski--Shapiro prime; exponential sum; asymptotic formula\\

\quad\,\,
{\textbf{MR(2020) Subject Classification}}: 11N05, 11N80, 11L07, 11L20

}

\begin{abstract}
Let $\pi(x;\gamma_1,\gamma_2)$ denote the number of primes $p$ with $p\leqslant x$ and $p=\lfloor n^{1/\gamma_1}_1\rfloor=\lfloor n^{1/\gamma_2}_2\rfloor$, where $\lfloor t\rfloor$ denotes the integer part of $t\in\mathbb{R}$ and $1/2<\gamma_2<\gamma_1<1$ are fixed constants. In this paper, we show that $\pi(x;\gamma_1,\gamma_2)$ holds an asymptotic formula for $21/11<\gamma_1+\gamma_2<2$, which constitutes an improvement upon the previous result of Baker \cite{Baker-2014}.
\end{abstract}

\maketitle

\section{Introduction and main result}
Let $1/2<\gamma<1$ be a fixed real number and $F_{\gamma}:=\{\lfloor n^{1/\gamma}\rfloor: n\in \mathbb{N}^+\}.$
In 1953, Piatetski--Shapiro \cite{P-S-1953} first studied the distribution of primes in the sequence $F_{\gamma}$. Let
\begin{equation*}
  \pi_{\gamma}(x):=\sum_{\substack{p\leqslant x\\p=\lfloor n^{1/\gamma}\rfloor}}1.
\end{equation*}
Piatetski--Shapiro \cite{P-S-1953} proved that, for $11/12<\gamma<1$, the asymptotic formula
\begin{equation}\label{P-S-PNT}
\pi_{\gamma}(x)=\frac{x^{\gamma}}{\log x}(1+o(1))
\end{equation}
holds as $x\to \infty$. The range of $\gamma$, which makes (\ref{P-S-PNT}) hold, was enlarged to $9/10$, $62/69$, $662/755$, $34/39$, $13/15$, $5302/6121$, $2426/2817$ by Kolesnik \cite{Kolesnik-1967-1972}, Graham (unpublished) and Leitmann \cite{Leitamnn-1980}, Heath--Brown \cite{Heath-Brown-1983}, Kolesnik \cite{Kolesnik-1985}, Liu and Rivat \cite{Liu-Rivat-1992}, Rivat \cite{Rivat-1992}, Rivat and Sargos \cite{Rivat-Sargos-2001}, respectively. The asymptotic formula (\ref{P-S-PNT}) is called the  Piatetski--Shapiro prime number theorem.

In 1982, Leitmann \cite{Leitmann-1982} studied a binary analogue of the Piatetski--Shapiro prime number theorem. Suppose that $1/2<\gamma_2<\gamma_1<1$ are two fixed constants. Define
\begin{equation*}
\pi(x;\gamma_1,\gamma_2):=\sum_{\substack{p\leqslant x\\ p=\lfloor n^{1/\gamma_1}_1\rfloor=\lfloor n^{1/\gamma_2}_2\rfloor}}1.
\end{equation*}
Leitmann \cite{Leitmann-1982} proved that, for $55/28<\gamma_1+\gamma_2<2$, the asymptotic formula
\begin{equation}\label{Intersection-P-S-PNT}
  \pi(x;\gamma_1,\gamma_2)=\frac{\gamma_1\gamma_2}{\gamma_1+\gamma_2-1}\cdot
  \frac{x^{\gamma_1+\gamma_2-1}}{\log x}(1+o(1))
\end{equation}
holds as $x\to \infty$, which is a generalization and analogue of (\ref{P-S-PNT}). In 1983, Sirota \cite{Sirota-1983} proved that (\ref{Intersection-P-S-PNT}) holds for $31/16<\gamma_1+\gamma_2<2$. Afterwards, Baker \cite{Baker-2014} improved the result of Sirota \cite{Sirota-1983}, and showed that $23/12<\gamma_1+\gamma_2<2$.

In this paper, we shall continue to improve the result of Baker \cite{Baker-2014}, and establish the following theorem.

\begin{theorem}\label{Theorem}
Suppose that $1/2<\gamma_2<\gamma_1<1$ with $21/11<\gamma_1+\gamma_2<2$. Then there holds
\begin{equation*}
\pi(x;\gamma_1,\gamma_2)=\gamma_1\gamma_2\int_{2}^{x}\frac{t^{\gamma_1+\gamma_2-2}}{\log t}\mathrm{d}t+O\big(x^{\gamma_1+\gamma_2-1}e^{-c_0\sqrt{\log x}}\big),
\end{equation*}
where $c_0>0$ is an absolute constant.
\end{theorem}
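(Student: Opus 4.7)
The plan is to reduce the count to a weighted sum of von Mangoldt weights $\Lambda(n)$ and then isolate a smooth main term from the fluctuating part coming from the sawtooth function $\psi(t)=t-\lfloor t\rfloor-1/2$. The classical identity
\begin{equation*}
\mathbf{1}_{F_\gamma}(n)=\lfloor -n^\gamma\rfloor-\lfloor -(n+1)^\gamma\rfloor = A_\gamma(n)+\Psi_\gamma(n),
\end{equation*}
valid for all $n$ sufficiently large when $1/2<\gamma<1$, with $A_\gamma(n)=(n+1)^\gamma-n^\gamma$ and $\Psi_\gamma(n)=\psi(-(n+1)^\gamma)-\psi(-n^\gamma)$, turns the joint indicator into a product. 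Expanding yields
\begin{equation*}
\sum_{n\le x}\frac{\Lambda(n)}{\log n}\,\mathbf{1}_{F_{\gamma_1}}(n)\mathbf{1}_{F_{\gamma_2}}(n)=T_0+T_1+T_2+T_3,
\end{equation*}
corresponding to the four products $A_1A_2$, $A_1\Psi_2$, $\Psi_1A_2$, $\Psi_1\Psi_2$ respectively (the contribution of prime powers, of order $x^{(\gamma_1+\gamma_2-1)/2}$, is trivially negligible).

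The term $T_0$ produces the main term: since $A_1(n)A_2(n)=\gamma_1\gamma_2 n^{\gamma_1+\gamma_2-2}(1+O(n^{-1}))$, partial summation combined with the prime number theorem in its classical de la Vall\'ee Poussin form $\psi(x)=x+O(x\exp(-c\sqrt{\log x}))$ gives
\begin{equation*}
T_0=\gamma_1\gamma_2\int_2^x\frac{t^{\gamma_1+\gamma_2-2}}{\log t}\,\mathrm{d}t+O\bigl(x^{\gamma_1+\gamma_2-1}e^{-c_0\sqrt{\log x}}\bigr).
\end{equation*}
For the three $\psi$-twisted error terms $T_1,T_2,T_3$, the standard route is to replace every factor of $\psi$ by Vaaler's trigonometric approximation
\begin{equation*}
\psi(t)=-\sum_{1\le|h|\le H}\frac{a(h)}{h}e(ht)+R_H(t),
\end{equation*}
with $a(h)\ll 1$ and $R_H$ dominated by a non-negative trigonometric polynomial of length $H$, where $H$ is a small positive power of $x$. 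The discrete-difference structure of $\Psi_j$ provides an extra saving of size $\min(1,h\,n^{\gamma_j-1})$, and after a dyadic decomposition each $T_j$ reduces to bounding mixed-phase exponential sums over primes of the form
\begin{equation*}
S(N;h_1,h_2)=\sum_{n\sim N}\Lambda(n)\,e\bigl(h_1n^{\gamma_1}+h_2n^{\gamma_2}\bigr),\qquad 1\le h_j\le H_j,
\end{equation*}
uniformly in the relevant ranges of $h_j$ (together with their single-$h$ analogues relevant to $T_1,T_2$).

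To estimate $S(N;h_1,h_2)$, I would appeal to Heath--Brown's identity for $\Lambda$, splitting $S$ into $O(\log^{O(1)}x)$ pieces of \emph{type I}, $\sum_m a_m\sum_{\ell\sim L}e(h_1(m\ell)^{\gamma_1}+h_2(m\ell)^{\gamma_2})$ with a smooth inner variable, and \emph{type II}, $\sum_{m,\ell}a_m b_\ell\,e(h_1(m\ell)^{\gamma_1}+h_2(m\ell)^{\gamma_2})$ with two unrestricted coefficient sequences. The type I sums are handled by Poisson summation (the van der Corput B-process) on the smooth variable, using the non-degenerate derivative of the phase $h_1\gamma_1 t^{\gamma_1}+h_2\gamma_2 t^{\gamma_2}$. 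The type II sums, which carry the main difficulty, are treated by Weyl--van der Corput differencing (the A-process, possibly iterated) combined with a suitable two-dimensional exponent pair applied to the differenced phase, or, in some sub-ranges, by the Fouvry--Iwaniec double large sieve. The hardest case is the doubly twisted sum $T_3$, where one has two independent frequency parameters $h_1,h_2$ and two distinct exponents $\gamma_1\ne\gamma_2$ interacting through a single variable $n$, so that a clean A-process saving is most delicate to extract. The precise threshold $\gamma_1+\gamma_2>21/11$ will arise from optimally balancing the type I/type II split produced by Heath--Brown's identity against the exponent pair used for this mixed phase; any sharpening at that optimisation translates directly into a smaller admissible lower bound for $\gamma_1+\gamma_2$, and conversely.
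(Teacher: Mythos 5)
Your outline coincides with the paper's strategy at every structural level: the same floor-function identity and four-fold decomposition into $\mathcal{F}_1,\dots,\mathcal{F}_4$, the main term from $\mathcal{F}_1$ via the prime number theorem, a finite Fourier (Vaaler/Heath--Brown) expansion of $\psi$, reduction to the bimonomial sums $\sum_{n\sim N}\Lambda(n)e(h_1n^{\gamma_1}+h_2n^{\gamma_2})$, and Heath--Brown's identity to split into type I and type II pieces. However, the proposal stops exactly where the theorem's content begins, and two gaps are substantive rather than routine. First, the product of the two Vaaler error terms $E_{H_1}(n)E_{H_2}(n)$ is not controlled by the one-dimensional properties of $R_H$ you cite: one needs a \emph{simultaneous} equidistribution estimate for the pair $(n^{\gamma_1},n^{\gamma_2})$, namely a bound for $\sum_{n\sim X}\min(1,(H_1\|n^{\gamma_1}\|)^{-1})\min(1,(H_2\|n^{\gamma_2}\|)^{-1})$. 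The paper imports this as Lemma \ref{k-min-esti} (Zhai's Proposition 4), and it is what forces $\gamma_1+\gamma_2>5/3$ in that part of the argument; your sketch does not mention it.

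Second, and more seriously, the appeal to ``a suitable two-dimensional exponent pair'' or the double large sieve for the type II sums, and to a bare B-process for the type I sums, does not engage with the actual obstruction: after Weyl differencing the phase is $h_1m^{\gamma_1}\Delta(n,q;\gamma_1)+h_2m^{\gamma_2}\Delta(n,q;\gamma_2)$ with $h_1h_2$ of either sign, so its derivatives can vanish on a positive-measure locus, and standard derivative tests or exponent pairs for a single monomial do not apply. The paper needs Zhai's first- and second-derivative estimates for sums with \emph{two} monomial phases (Lemma \ref{S_k(M)-Zhai}), and the type I estimate (Lemma \ref{S(I)}) requires a five-case analysis isolating the degenerate set where $ah_1(mn)^{\gamma_1}+bh_2(mn)^{\gamma_2}$ is small (disposed of by a divisor argument), plus a verification that the stationary-phase amplitude $|f_{nn}|^{-1/2}$ and the second derivative $s_{mm}$ of the transformed phase are non-degenerate, which comes down to showing a certain quadratic form $As^2+2Bst+Ct^2$ is bounded away from zero. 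None of this is generic exponent-pair bookkeeping, and the threshold $21/11$ is precisely the output of the parameter choice $Q=\lfloor X^{2/11+8\varepsilon}\rfloor$ in this analysis; asserting that the threshold ``will arise from optimally balancing'' the pieces leaves the entire quantitative content of the theorem unproved.
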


\textbf {Remark.} Baker \cite{Baker-2014} and  Zhai \cite{Zhai-1999} studied the distribution of primes in the intersection of $F_{\gamma_1}, \dots, F_{\gamma_k},$ where
$k\geqslant 3$ is a fixed integer.

\smallskip
\textbf{Notation.}
Throughout this paper, $\varepsilon$ and $\eta$ are sufficiently small positive numbers, which may be different in each occurrences. Let $p$, with or without subscripts, always denote a prime number. We use $\lfloor t\rfloor,\{t\}$ and $\|t\|$ to denote the integral part of $t$, the fractional part of $t$ and the distance from $t$ to the nearest integer, respectively. As usual, $\Lambda(n)$ and $d(n)$ denote von Mangoldt's function and Dirichlet divisor function, respectively. We write $\psi(t)=t-\lfloor t\rfloor-1/2,e(x)=e^{2\pi ix},\mathscr{L}=\log x$. The notation $n\sim N$ means $N<n\leqslant 2N$ and  $f(x)\ll g(x)$ means  $f(x)=O(g(x))$.  Denote by $c_j$ the positive constants which depend at most on $\gamma_1$ and $\gamma_2$. We use $\mathbb{N}^+$ and $\mathbb{Z}$ to denote the set of positive natural numbers and the set of integers, respectively.

\section{ Preliminaries}

\noindent
In this section, we we list some lemmas which are necessary for proving Theorem \ref{Theorem}.

\begin{lemma}\label{1-2-derivative}
Suppose that $5<a<b\leqslant2a$ and  $f''\in C[a,b]$. If $0<c_1\lambda_1\leqslant |f'(x)|\leqslant c_2\lambda_1$, then
\begin{equation}\label{1-derivative}
\sum_{a<n\leqslant b}e(f(n))\ll \lambda^{-1}_1.
\end{equation}
If $0<c_3\lambda_2\leqslant|f''(x)|\leqslant c_4\lambda_2$, then
\begin{equation}\label{2-derivative}
\sum_{a<n\leqslant b}e(f(n))\ll a\lambda^{1/2}_2+\lambda^{-1/2}_2.
\end{equation}
where $c_i\,(i=1,2,3,4)$ are absolute constants.
\end{lemma}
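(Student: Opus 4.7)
My approach is to establish both bounds by the classical van der Corput derivative tests, deducing (\ref{1-derivative}) from the Kuzmin--Landau inequality and then (\ref{2-derivative}) from (\ref{1-derivative}) by a subdivision argument.

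For (\ref{1-derivative}), I would first note that since $f'' \in C[a,b]$ and $|f'| \asymp \lambda_1 > 0$, the derivative $f'$ is strictly monotonic on $[a,b]$; moreover I may assume $\lambda_1 < 1/2$, since otherwise the trivial bound $\sum e(f(n)) \ll b-a$ combined with $b \leq 2a$ already suffices in the intended range of application. Applying Abel's summation formula and using the mean-value estimate
$$|e(f(n+1)) - e(f(n))| \gg \|f(n+1) - f(n)\| = \|f'(\xi_n)\|$$
for some $\xi_n \in (n,n+1)$, one reduces the bound on the sum to a geometric-type series in $\|f'(\xi_n)\|^{-1}$. Since $f'$ is monotonic with $|f'| \asymp \lambda_1$ and $\|\cdot\|$ agrees with $|\cdot|$ for values below $1/2$, the resulting series telescopes to give $\ll \lambda_1^{-1}$.

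For (\ref{2-derivative}), my plan is to partition $(a,b]$ into subintervals $I_j$ on each of which $\lfloor f'(x) + 1/2 \rfloor$ takes a constant integer value $m_j$. Because $|f''| \asymp \lambda_2$ and $f'$ is monotonic, each $I_j$ has length $\ll \lambda_2^{-1}$, so the total number of such subintervals is $\ll 1 + (b-a)\lambda_2 \ll 1 + a\lambda_2$. On each $I_j$ the exponential $e(f(n))$ is unchanged when $f(n)$ is replaced by $f(n) - m_j n$, and the new derivative $g'(x) = f'(x) - m_j$ lies in $[-1/2, 1/2]$ but may vanish inside $I_j$; I would therefore further decompose $I_j$ dyadically into pieces $I_{j,k}$ with $|g'(x)| \asymp 2^{-k}$. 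By (\ref{1-derivative}), the contribution of $I_{j,k}$ is $\ll 2^k$, while the trivial bound is the length of $I_{j,k}$, namely $\ll 2^{-k}\lambda_2^{-1}$. Taking the minimum and summing the geometric series over $k$ (the two quantities balance at $2^k = \lambda_2^{-1/2}$) yields $\ll \lambda_2^{-1/2}$ per $I_j$; multiplying by the number of $I_j$'s then delivers the claimed bound $\ll a\lambda_2^{1/2} + \lambda_2^{-1/2}$.

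The only delicate point is the bookkeeping in (\ref{2-derivative}) --- ensuring that the dyadic pieces tile $I_j$ cleanly, that the first-derivative hypothesis of (\ref{1-derivative}) is genuinely satisfied on each piece (in particular that $|g'|$ is bounded away from $0$), and that the two competing bounds balance to produce $\lambda_2^{-1/2}$ rather than $\lambda_2^{-1/2}\log(1/\lambda_2)$. Fortunately, both assertions are entirely classical: they appear, for example, as Theorems 2.2 and 2.3 of Graham--Kolesnik and as Theorems 5.9 and 5.11 of Titchmarsh, and in the Piatetski--Shapiro literature they are invariably cited rather than reproved.
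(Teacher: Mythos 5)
Your outline is correct: it is the standard Kuzmin--Landau argument for (\ref{1-derivative}) and the usual subdivision-plus-balancing derivation of (\ref{2-derivative}) from it, which is exactly the content of the result the paper invokes --- the paper itself gives no proof, only the citation to Graham and Kolesnik. The one point worth keeping explicit is the one you already flag: as literally stated, the first-derivative bound needs $f'$ monotonic and $c_2\lambda_1<1$ (so that $\|f'\|\asymp|f'|$), which is implicit in the paper's formulation and harmless in its applications.
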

\begin{proof}
See Lemma $2.1$ of Graham and Kolesnik \cite{Graham-Kolesnik-1991}.  $\hfill$
\end{proof}

Let $k\geqslant 2$ be a fixed integer. Suppose that $a_1,\dots,a_k$ are real numbers with $a_1\cdots a_k\neq0$. Let $\alpha_1,\alpha_2,\dots,\alpha_k$ be distinct real constants such that $\alpha_j\notin \mathbb{Z}\,(j=1,\dots,k)$. $u_1,\dots,u_k$ are real numbers with $u_j\in[0,1]\,(j=1,\dots,k)$. $M$ and $M_1$ are  large positive real numbers with $M<M_1\leqslant 2M$. For convenience, we set
\begin{align*}
 & f_k(m)=a_1(m+u_1)^{\alpha_1}+\cdots+a_k(m+u_k)^{\alpha_k},\\
 & R=|a_1|M^{\alpha_1}+\dots+|a_k|M^{\alpha_k},\qquad
   S_k(M):=\sum_{M<m\leqslant M_1}e(f_k(m)).
\end{align*}
\begin{lemma}\label{S_k(M)-Zhai}
Suppose that $\eta$ is a sufficiently small fixed positive number.
If $R\leqslant \eta M$, then
\begin{equation}\label{1-derivative-estimate}
S_k(M)\ll MR^{-1/k}.
\end{equation}
If $R\gg M$, then
\begin{equation}\label{2-derivative-estimate}
S_k(M)\ll R^{1/2}+MR^{-1/(k+1)}.
\end{equation}
\end{lemma}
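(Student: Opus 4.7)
The plan is to proceed by induction on $k \geq 2$, handling the base case $k = 2$ via the second-derivative bound (\ref{2-derivative}) of Lemma \ref{1-2-derivative} and carrying out the inductive step via a single application of the Weyl--van der Corput ($A$-)process.

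For $k = 2$, I compute
\[
f_2''(m) = a_1\alpha_1(\alpha_1-1)(m+u_1)^{\alpha_1-2} + a_2\alpha_2(\alpha_2-1)(m+u_2)^{\alpha_2-2}.
\]
Since $\alpha_1 \neq \alpha_2$ and neither is an integer, both prefactors are non-zero, and the ratio of the two summands is a strictly monotone function of $m$ on $[M, M_1]$. Hence $[M, M_1]$ splits into $O(1)$ sub-intervals on each of which one of the two terms dominates, giving $|f_2''(m)| \asymp RM^{-2}$ on each piece. Applying (\ref{2-derivative}) with $\lambda_2 = RM^{-2}$ on each sub-interval and summing the contributions yields
\[
S_2(M) \ll M(RM^{-2})^{1/2} + (RM^{-2})^{-1/2} = R^{1/2} + MR^{-1/2}.
\]
When $R \leq \eta M$ this simplifies to $MR^{-1/2} = MR^{-1/k}$, which is (\ref{1-derivative-estimate}) for $k = 2$; when $R \gg M$ it is strictly stronger than $R^{1/2} + MR^{-1/3}$, so (\ref{2-derivative-estimate}) follows for $k = 2$.

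For the induction step, assume the lemma for $k - 1 \geq 2$. The idea is to apply the Weyl--van der Corput inequality to $S_k(M)$ with a parameter $1 \leq Q \leq M$, obtaining
\[
|S_k(M)|^2 \ll \frac{M^2}{Q} + \frac{M}{Q} \sum_{1 \leq q \leq Q} |T_q|,
\]
where $T_q = \sum_m e(f_k(m+q) - f_k(m))$. The heart of the argument is the analysis of the phase difference $g_q(m) := f_k(m+q) - f_k(m)$. Writing $g_q(m) = q\int_0^1 f_k'(m+tq)\,\mathrm{d}t$ and using that the shifted exponents $\alpha_j - 1$ remain distinct and non-integral, a Vandermonde-type rearrangement allows $g_q$ to be replaced (up to an acceptable perturbation) by an $f_{k-1}$-type phase whose effective $R$-parameter is $\widetilde R \asymp qR/M$. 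Invoking the inductive hypothesis on $T_q$, summing over $q \leq Q$, and optimising $Q$ then delivers the bound $MR^{-1/k}$ in the regime $R \leq \eta M$ and $R^{1/2} + MR^{-1/(k+1)}$ in the regime $R \gg M$.

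The main obstacle will be in the induction step: the raw difference $g_q$ naturally contains $k$ oscillating summands rather than $k - 1$, so a careful Wronskian-type argument is needed to collapse one of them into the others with controlled error. Propagating that error through the optimisation of $Q$ so as to recover precisely the exponents $1/k$ and $1/(k+1)$---rather than the weaker, van der Corput-style exponents $1/(2^{k}-2)$ that a naive iteration of the $A$-process plus Lemma \ref{1-2-derivative} would produce---is the technical crux of the argument.
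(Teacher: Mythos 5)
The paper does not prove this lemma at all --- it is quoted verbatim from Propositions 2 and 3 of Zhai \cite{Zhai-1999} --- so your argument has to stand on its own, and it does not.

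The base case $k=2$ already contains a false step. You claim that $[M,M_1]$ splits into $O(1)$ sub-intervals on each of which one of the two terms of $f_2''$ dominates, so that $|f_2''(m)|\asymp RM^{-2}$ throughout. When $a_1a_2<0$ the two terms can cancel at some $m_0\in[M,M_1]$, and near $m_0$ one only has $|f_2''(m)|\asymp |m-m_0|\,RM^{-3}$; no decomposition into $O(1)$ pieces with a two-sided bound $\asymp RM^{-2}$ exists --- the correct statement is only the sub-level-set bound $|\{m:|f_2''(m)|\leqslant\delta RM^{-2}\}|\ll\delta M$. The conclusion you draw, $S_2(M)\ll R^{1/2}+MR^{-1/2}$, is in fact false: choosing $a_1,a_2$ so that $f_2''(m_0)=0$ and $f_2'(m_0)\in\mathbb{Z}$, and taking $M_1-M\asymp MR^{-1/3}$ around $m_0$, the terms $e(f_2(m))$ are essentially constant on that range (since $f_2'''\asymp RM^{-3}$), giving $S_2(M)\gg MR^{-1/3}$, which exceeds $R^{1/2}+MR^{-1/2}$ whenever $M\ll R\ll M^{6/5}$. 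This is exactly why \eqref{2-derivative-estimate} carries the exponent $-1/(k+1)$ rather than $-1/k$. Even \eqref{1-derivative-estimate} for $k=2$ cannot come from the second-derivative test alone (a dyadic decomposition around the zero of $f_2''$ again yields only $MR^{-1/3}$); one needs the Kuzmin--Landau first-derivative test on $\{|f_2'|\geqslant\delta\}$ together with a measure bound for $\{|f_2'|<\delta\}$ obtained from the fact that $|f_2'|M$ and $|f_2''|M^2$ cannot both be $o(R)$.

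The inductive step is a strategy, not a proof, and its pivotal reduction does not exist. After Weyl--van der Corput the differenced phase $f_k(m+q)-f_k(m)$ is, up to admissible errors, $\sum_j a_j\alpha_j q(m+u_j)^{\alpha_j-1}$: a sum of $k$ monomials with $k$ distinct non-integral exponents $\alpha_j-1$, i.e.\ an $f_k$-type phase with parameter $qR/M$, not an $f_{k-1}$-type phase. The ``Vandermonde-type rearrangement'' that is supposed to collapse one monomial into the others is precisely the step you defer as ``the technical crux'', and it is not available: $k$ monomials with distinct exponents cannot be approximated by $k-1$ such monomials up to an acceptable perturbation. Nor does the bookkeeping close even granting it: the $A$-process plus the inductive hypothesis yields savings of van der Corput type in $Q$, and you give no mechanism for recovering the exponents $1/k$ and $1/(k+1)$. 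The workable route is non-inductive: a Vandermonde/Wronskian argument shows that for every $m$ some derivative $f^{(i)}$ with $1\leqslant i\leqslant k$ (resp.\ $2\leqslant i\leqslant k+1$) satisfies $|f^{(i)}(m)|\gg RM^{-i}$; this gives sub-level-set estimates for $f'$ (resp.\ $f''$), after which the first-derivative test plus a count of the exceptional set (resp.\ the second-derivative test \eqref{2-derivative} applied on dyadic ranges of $|f''|$) and an optimisation of the cut-off produce exactly $MR^{-1/k}$ and $R^{1/2}+MR^{-1/(k+1)}$.
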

\begin{proof}
See Proposition $2$ and Proposition $3$  of Zhai \cite{Zhai-1999}.  $\hfill$
\end{proof}

\begin{lemma}\label{k-min-esti}
Suppose that $1/2<\gamma_k<\dots<\gamma_1<1$ are fixed real numbers. Let
\begin{equation*}
S(M;\gamma_1,\dots,\gamma_k):=\sup_{(u_1,\dots,u_k)\in [0,1]^k}\sum_{M<m\leqslant2M}\prod_{j=1}^k\min\left(1,\frac{1}{H_j\|(m+u_j)^{\gamma_j}\|}\right),
\end{equation*}
where $H_j>1\,(j=1,\dots,k)$ are real numbers. If $\gamma_1+\dots+\gamma_k>k-\frac{1}{k+1}$, then
\begin{equation*}
S(M;\gamma_1,\dots,\gamma_k)\ll M(H_1\cdots H_k)^{-1}(\log M)^k+M^{\frac{k}{k+1}}(\log M)^{k}.
\end{equation*}
\end{lemma}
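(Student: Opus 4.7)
The plan is to combine a Fourier expansion of each factor $\min\bigl(1,(H_j\|x\|)^{-1}\bigr)$ with the exponential sum bounds from Lemma~\ref{S_k(M)-Zhai}. I would start from the standard Vaaler-type estimate
\[
\min\!\Bigl(1,\tfrac{1}{H_j\|x\|}\Bigr)\ll \tfrac{\log H_j}{H_j}+\sum_{h\neq 0}a_h^{(j)}\,e(hx),\qquad |a_h^{(j)}|\ll \min\!\Bigl(\tfrac{1}{|h|},\tfrac{H_j}{h^2}\Bigr),
\]
multiply these inequalities for $j=1,\ldots,k$, sum over $M<m\le 2M$, and interchange the order of summation. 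This reduces the problem to bounding
\[
\sum_{\mathbf{h}\in\mathbb{Z}^k}\Bigl(\prod_{h_j=0}\tfrac{\log H_j}{H_j}\Bigr)\Bigl(\prod_{h_j\neq 0}|a_{h_j}^{(j)}|\Bigr)|T(\mathbf{h})|,
\]
where $T(\mathbf{h})=\sum_{M<m\le 2M}e\!\left(\sum_{j\in J}h_j(m+u_j)^{\gamma_j}\right)$ and $J=\{j:h_j\neq 0\}$. The diagonal $\mathbf{h}=\mathbf{0}$, for which $T(\mathbf{0})=M$, contributes exactly $M\prod_j(\log H_j)/H_j\ll M(H_1\cdots H_k)^{-1}(\log M)^k$, giving the first term of the claimed bound.

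For $\mathbf{h}\neq\mathbf{0}$, I would stratify by the support $J$ (with $k'=|J|\in\{1,\ldots,k\}$) and perform a dyadic decomposition $|h_j|\sim T_j$ for $j\in J$. Writing $R=\sum_{j\in J}T_jM^{\gamma_j}$, Lemma~\ref{S_k(M)-Zhai} (applied with $k'$ in place of $k$) supplies two regimes: if $R\le\eta M$, estimate \eqref{1-derivative-estimate} yields $|T(\mathbf{h})|\ll MR^{-1/k'}$, while if $R\gg M$, estimate \eqref{2-derivative-estimate} yields $|T(\mathbf{h})|\ll R^{1/2}+MR^{-1/(k'+1)}$. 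In the large-$R$ regime one has $MR^{-1/(k'+1)}\le M\cdot M^{-1/(k+1)}=M^{k/(k+1)}$, and summing over the $O((\log M)^k)$ dyadic boxes produces the second term of the target bound. In the small-$R$ regime I would invoke AM--GM, $R\ge k'\bigl(\prod_{j\in J}T_jM^{\gamma_j}\bigr)^{1/k'}$, converting the sum over dyadic $\mathbf{T}$ into a convergent geometric-type series and producing a bound of order $M^{1-(\sum_{j\in J}\gamma_j)/(k')^2}$; the hypothesis $\gamma_1+\cdots+\gamma_k>k-1/(k+1)$, together with $\gamma_j<1$, forces $\sum_{j\in J}\gamma_j>k'-1/(k+1)$, which implies $k'(k+1-k')\ge 1$ and hence $1-(\sum_{j\in J}\gamma_j)/(k')^2\le k/(k+1)$ for every subset $J$.

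The main obstacle is the careful bookkeeping across the choice of support $J$, the dyadic ranges $\mathbf{T}$, and the two decay regimes of the Fourier coefficients ($|h|\le H_j$ with $|a_h|\ll 1/|h|$ versus $|h|>H_j$ with $|a_h|\ll H_j/h^2$, the latter effectively truncating the $T_j$'s at $H_j$ and absorbing the auxiliary $R^{1/2}$ contribution in the large-$R$ regime). Verifying that each combination still fits under one of the two terms of the claimed bound—and in particular that the hypothesis on $\gamma_1+\cdots+\gamma_k$ is precisely what renders the small-$R$ contributions admissible for every $J$—is the technical heart of the argument.
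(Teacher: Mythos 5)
Your strategy --- Fourier-expand each factor via Lemma \ref{Finite-Fourier-expansion}, stratify by the support $J$ of $\mathbf{h}$ and by dyadic ranges $|h_j|\sim T_j$, and feed the resulting exponential sums into Lemma \ref{S_k(M)-Zhai} with $k'=|J|$ --- is the standard route and is surely how the cited source (Zhai's Proposition 4, whose Propositions 2 and 3 are exactly Lemma \ref{S_k(M)-Zhai}) proceeds; the paper itself only gives the citation. Your small-$R$ analysis is correct and you have identified exactly the right arithmetic: from $\gamma_j<1$ one gets $\sum_{j\in J}\gamma_j>k'-\tfrac{1}{k+1}$, and the inequality $M^{1-(\sum_{j\in J}\gamma_j)/(k')^2}\leqslant M^{k/(k+1)}$ reduces to $k'(k+1-k')\geqslant 1$, which holds for all $1\leqslant k'\leqslant k$. (Minor wording point: $k'(k+1-k')\geqslant1$ is a standalone fact about integers, not a consequence of the hypothesis on the $\gamma_j$; the hypothesis is what makes $\sum_{j\in J}\gamma_j\geqslant (k')^2/(k+1)$ follow from it.)

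The genuine gap is the $R^{1/2}$ term in the large-$R$ regime, which you assert is ``absorbed'' but which the argument as described does not control. Carrying out the computation: the weight of a dyadic box is $\prod_{j\in J}\min(1,H_j/T_j)$, and $R^{1/2}\ll\sum_{i\in J}(T_iM^{\gamma_i})^{1/2}$, so summing $\min(1,H_i/T_i)\,T_i^{1/2}M^{\gamma_i/2}$ over dyadic $T_i$ is maximized near $T_i=H_i$ and yields a total contribution of order $\max_i H_i^{1/2}M^{\gamma_i/2}(\log M)^{k-1}$. This is $\ll M^{k/(k+1)}(\log M)^{k}$ only if $H_i\ll M^{2k/(k+1)-\gamma_i}$, whereas the lemma is stated for arbitrary $H_i>1$; e.g.\ for $k=2$, $\gamma_1=\gamma_2=0.95$, $H_1=H_2=M^{1/2}$ your bound gives $M^{0.725}$ against a target of $M^{2/3}$. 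You need either to impose (or deduce by a monotonicity/truncation reduction, which does not work naively here since replacing $H_j$ by $\min(H_j,M^{1-\gamma_j})$ inflates the first term to $M^{1+\sum\gamma_j-k}>M^{k/(k+1)}$) a size restriction on the $H_j$, or to note that such a restriction is implicit in the source and harmless for this paper, where $H_i=X^{1-\gamma_i+\varepsilon}$ gives $R\ll M^{1+\varepsilon}$ and $R^{1/2}\ll M^{1/2+\varepsilon}$. Two smaller points to patch: Lemma \ref{S_k(M)-Zhai} as quoted leaves the window $\eta M<R<cM$ uncovered, so you must say why the two regimes exhaust all dyadic boxes; and the zero-frequency coefficient is only $a(0)\ll(\log 2H_j)/H_j$, so the diagonal contributes ``at most'', not ``exactly'', the first term.
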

\begin{proof}
See Proposition $4$ of Zhai \cite{Zhai-1999}.  $\hfill$
\end{proof}

\begin{lemma}\label{Finite-Fourier-expansion}
For any $H>1$, we have
\begin{equation*}
  \psi(\theta)=-\sum_{0<|h|\leqslant H}\frac{e(\theta h)}{2\pi ih}+O\left(g(\theta,H)\right),
\end{equation*}
where
\begin{equation*}
g(\theta,H)=\min\left(1,\frac{1}{H\|\theta\|}\right)
=\sum_{h=-\infty}^{\infty}a(h)e(\theta h),
\end{equation*}
\begin{equation*}
a(0)\ll \frac{\log 2H}{H},\ \
a(h)\ll \min\left(\frac{1}{|h|},\frac{H}{h^2}\right)\ (h\neq 0).
\end{equation*}
\end{lemma}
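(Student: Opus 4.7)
The lemma splits into two independent assertions: first, a truncated Fourier expansion of the sawtooth $\psi(\theta)$ with error controlled by $g(\theta,H)$; and second, an identification of $g(\theta,H)$ with its (absolutely convergent) Fourier series together with explicit bounds on the coefficients $a(h)$. I would prove them separately.

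For the first assertion, the natural starting point is the pointwise Fourier identity $\psi(\theta)=-\sum_{h\neq 0}e(\theta h)/(2\pi i h)$, valid for $\theta\notin\mathbb{Z}$, so the task reduces to bounding the symmetric tail $R_H(\theta):=-\sum_{|h|>H}e(\theta h)/(2\pi ih)$ by $O(g(\theta,H))$. When $H\|\theta\|\geq 1$ I would apply Abel summation together with the geometric bound $|\sum_{h=1}^{N}e(\theta h)|\leq (2\|\theta\|)^{-1}$ to obtain $R_H(\theta)\ll 1/(H\|\theta\|)$. When $H\|\theta\|<1$ the "$1$" in the minimum has to come from the uniform boundedness of the partial Fourier sums of $\psi$ combined with $|\psi(\theta)|\leq 1/2$; this uniform boundedness can be read off by writing the partial sum $\sum_{h=1}^{H}\sin(2\pi h\theta)/(\pi h)$ as an integral of the Dirichlet kernel and invoking the standard explicit bound on $\int_0^{\theta}|D_H|$.

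For the second assertion, note that $g(\cdot,H)$ is $1$-periodic, even, continuous, and of bounded variation on $[0,1]$, so its Fourier series converges absolutely to $g$ and the coefficient equals $a(h)=\int_0^1 g(\theta,H)e(-\theta h)\,\mathrm{d}\theta$. I would compute $a(0)$ directly: splitting at $\|\theta\|=1/H$ gives $a(0)=2/H+(2/H)\int_{1/H}^{1/2}\theta^{-1}\,\mathrm{d}\theta\ll \log(2H)/H$. For $h\neq 0$, evenness yields $a(h)=2\int_0^{1/2}g(\theta,H)\cos(2\pi h\theta)\,\mathrm{d}\theta$, and one integration by parts annihilates the boundary terms (since $\sin(0)=\sin(\pi h)=0$), leaving $a(h)=-(\pi h)^{-1}\int_{1/H}^{1/2}g'(\theta)\sin(2\pi h\theta)\,\mathrm{d}\theta$ with $g'(\theta)=-1/(H\theta^2)$. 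Bounding $|\sin|\leq 1$ and integrating $\theta^{-2}$ on $[1/H,1/2]$ immediately produces $a(h)\ll 1/|h|$; a second integration by parts, writing $\sin(2\pi h\theta)=-(2\pi h)^{-1}(\cos(2\pi h\theta))'$, yields boundary and integral contributions of order $H^2/|h|$, whence $a(h)\ll H/h^2$.

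The main obstacle is the "$O(1)$" clause of the tail bound in the first assertion when $H\|\theta\|<1$: the naive termwise bound on $R_H(\theta)$ diverges logarithmically in $H$, and the required uniform boundedness reflects the Gibbs phenomenon, so it must be extracted from a careful Dirichlet-kernel estimate rather than a crude modulus estimate. Everything after that point is a routine integration by parts on a piecewise smooth monotone function.
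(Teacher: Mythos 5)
Your proposal is correct and is essentially the argument the paper implicitly relies on: the paper offers no proof of its own, merely citing Section~2 of Heath--Brown \cite{Heath-Brown-1983}, and your two-part treatment (tail estimate for the sawtooth series, then two integrations by parts for the Fourier coefficients of $g$) is exactly that standard argument. One small caution: the quantity $\int_0^{\theta}|D_H|$ is \emph{not} uniformly bounded in $H$ (the Lebesgue constants grow like $\log H$); the correct classical fact is the uniform boundedness of $\int_0^{\theta}D_H$ without absolute values, and in any case in the only regime where you need the ``$O(1)$'' clause, namely $H\|\theta\|<1$, the trivial bound $|\sin(2\pi h\theta)|\leqslant 2\pi h\|\theta\|$ already gives $\big|\sum_{h\leqslant H}\sin(2\pi h\theta)/(\pi h)\big|\leqslant 2H\|\theta\|<2$, so no Dirichlet-kernel input is required.
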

\begin{proof}
See Section $2$ of Heath--Brown \cite{Heath-Brown-1983}.  $\hfill$
\end{proof}

\begin{lemma}\label{Weyl inequality}
Let $z(n)$ be any complex numbers, $1\leqslant Q\leqslant N$. Then we have
\begin{equation*}
  \left|\sum_{N<n\leqslant CN}z(n)\right|^2\ll \frac{N}{Q}\sum_{|q|\leqslant Q}\left(1-\frac{|q|}{Q}\right)\Re\sum_{N<n\leqslant CN-q}z(n)\overline{z(n+q)}.
\end{equation*}
\end{lemma}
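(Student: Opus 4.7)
The plan is to prove this standard Weyl--van der Corput shift inequality by the classical \emph{smoothing + Cauchy--Schwarz + diagonalization} argument. First, I would introduce $S=\sum_{N<n\leqslant CN}z(n)$, extend $z(n)$ by zero outside $(N,CN]$, and exploit the trivial identity
\begin{equation*}
Q\cdot S=\sum_{m}\sum_{q=0}^{Q-1}z(m+q),
\end{equation*}
in which the outer variable $m$ runs over an interval of length $(C-1)N+Q\ll N$ (using the hypothesis $Q\leqslant N$). The point is that shifting the summation variable by $q$ leaves the sum invariant because $z$ vanishes outside the original window, while the inner $Q$-fold averaging will later be turned into a shift sum.

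Next I would apply the Cauchy--Schwarz inequality to the outer sum over $m$. This produces
\begin{equation*}
Q^{2}|S|^{2}\ll N\sum_{m}\Bigl|\sum_{q=0}^{Q-1}z(m+q)\Bigr|^{2}.
\end{equation*}
Expanding the inner square yields a double sum over $0\leqslant q_{1},q_{2}\leqslant Q-1$, and the change of variables $q=q_{2}-q_{1}$, $n=m+q_{1}$ reorganizes it by the shift $q$: for each $|q|<Q$ there are exactly $Q-|q|$ pairs $(q_{1},q_{2})$ with $q_{2}-q_{1}=q$. Thus
\begin{equation*}
Q^{2}|S|^{2}\ll N\sum_{|q|<Q}(Q-|q|)T(q),\qquad T(q):=\sum_{n}z(n)\overline{z(n+q)},
\end{equation*}
the inner sum over $n$ being restricted so that both $n$ and $n+q$ lie in $(N,CN]$.

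To put the bound into the form stated in the lemma, I would handle the symmetry between $q$ and $-q$. For $q\geqslant 0$ the admissible range is $N<n\leqslant CN-q$, while for $q<0$ one checks by the substitution $m=n+q$ that $T(-q)=\overline{T(q)}$. Consequently $\sum_{|q|<Q}(Q-|q|)T(q)$ is real and equals $\sum_{|q|<Q}(Q-|q|)\Re T(q)$; the boundary term $|q|=Q$ contributes nothing because of the factor $1-|q|/Q$, so extending the range to $|q|\leqslant Q$ is harmless. Dividing through by $Q^{2}$ yields
\begin{equation*}
|S|^{2}\ll \frac{N}{Q}\sum_{|q|\leqslant Q}\Bigl(1-\tfrac{|q|}{Q}\Bigr)\Re\sum_{N<n\leqslant CN-q}z(n)\overline{z(n+q)},
\end{equation*}
which is the claim. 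There is no real obstacle here; the only place requiring a touch of care is the bookkeeping for the range of $n$ and the reduction to $q\geqslant 0$ via complex conjugation, and the verification that the factor $(C-1)N+Q$ produced by Cauchy--Schwarz is indeed $\ll N$ under $Q\leqslant N$.
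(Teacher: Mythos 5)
Your argument is correct and is precisely the standard smoothing--Cauchy--Schwarz--diagonalization proof of the Weyl--van der Corput inequality; the paper does not reprove the lemma but simply cites Lemma 2.5 of Graham and Kolesnik, where this exact argument appears. The bookkeeping you flag (the count $Q-|q|$ of pairs with $q_2-q_1=q$, the identity $T(-q)=\overline{T(q)}$ making the sum real, and $(C-1)N+Q\ll N$ from $Q\leqslant N$) is all handled correctly.
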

\begin{proof}
See Lemma 2.5 of Graham and Kolesnik \cite{Graham-Kolesnik-1991}.  $\hfill$
\end{proof}

\begin{lemma}\label{Inversion-Formula}
Suppose that $f^{(4)}\in C[a,b]$ and $g''\in C[a,b]$  such that
\begin{align*}
&|f''(x)|\sim\frac{1}{D}, \qquad|f^{(3)}(x)|\ll\frac{1}{DU}\,(U\geqslant1),\\
&|g(x)|\ll G,
\qquad|g'(x)|\ll\frac{G}{U_1}\,(U_1\geqslant1).
\end{align*}
Then one has
\begin{align*}
\sum_{a<n\leqslant b}g(n)e\left(f(n)\right)&=\sum_{\alpha<v\leqslant \beta}\frac{b_vg(n_v)}{\sqrt{|f''(n_v)|}}e\left(f(n_v)-vn_v+\frac{1}{8}\right)\\
&\quad+O\left(G\log(\beta-\alpha+2)+U^{-1}G(b-a+D)\right)\\
&\quad+O\left(G\min\left(\sqrt{D},\frac{1}{\|\alpha\|}\right)
+G\min\left(\sqrt{D},\frac{1}{\|\beta\|}\right)\right),
\end{align*}
where $[\alpha,\beta]$ is the image of $[a,b]$ under the mapping $y=f'(x)$, $n_v$ is the solution of the equation $f'(x)=v,$ and
\begin{equation*}
b_v=
\begin{cases}
     1,  & \textrm{if $\alpha<v<\beta$},\\
    \frac{1}{2}, & \textrm{if $u=\alpha\in \mathbb{Z}$ or $u=\beta\in \mathbb{Z}$}.
   \end{cases}
\end{equation*}
\end{lemma}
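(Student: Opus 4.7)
The plan is to prove this result as the classical truncated-Poisson / stationary-phase (van der Corput B-process) identity, which appears essentially as Theorem~$2.2$ of Graham--Kolesnik \cite{Graham-Kolesnik-1991} and descends from Titchmarsh. The strategy has two movements: first convert the discrete sum into a sum of Fourier integrals via Poisson summation, and then evaluate each integral by the saddle-point method.

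For the first movement, I would apply Euler--Maclaurin to $F(x) = g(x) e(f(x))$ to obtain
$$\sum_{a<n\leqslant b} g(n)e(f(n)) = \int_a^b g(x)e(f(x))\,\mathrm{d}x - \int_a^b \psi(x)\bigl(g(x)e(f(x))\bigr)'\,\mathrm{d}x + \psi(a)F(a)-\psi(b)F(b),$$
and substitute the finite Fourier expansion of $\psi$ given in Lemma~\ref{Finite-Fourier-expansion}. Swapping summation and integration then produces
$$\sum_{a<n\leqslant b} g(n)e(f(n)) = \sum_{v\in\mathbb{Z}}\int_a^b g(x)\,e\bigl(f(x)-vx\bigr)\,\mathrm{d}x \;+\; (\text{boundary contributions}),$$
where the boundary terms $\psi(a)F(a)$ and $\psi(b)F(b)$, together with the nearly-stationary integrals $I_v$ for $v$ very close to $\alpha$ or $\beta$, will be responsible for the $G\min(\sqrt{D},1/\|\alpha\|)$ and $G\min(\sqrt{D},1/\|\beta\|)$ errors in the conclusion after processing by Lemma~\ref{1-2-derivative}.

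For the second movement, I fix $v\in(\alpha,\beta]$ and estimate $I_v := \int_a^b g(x)e(f(x)-vx)\,\mathrm{d}x$ by stationary phase. Since $|f''|\sim 1/D$, the derivative $f'$ is a diffeomorphism of $[a,b]$ onto $[\alpha,\beta]$, so the phase $\phi_v(x):=f(x)-vx$ has a unique critical point $n_v$ precisely for $v\in[\alpha,\beta]$. I would localize to a Fresnel window $|x-n_v|\ll\sqrt{D}$, replace $\phi_v$ by its quadratic Taylor expansion about $n_v$ (with the cubic remainder controlled by $|f^{(3)}|\ll 1/(DU)$), replace $g(x)$ by $g(n_v)$ (using $|g'|\ll G/U_1$), and then invoke the Fresnel identity
$$\int_{-\infty}^{\infty} e\!\left(\tfrac{1}{2}f''(n_v)u^2\right)\mathrm{d}u = \frac{1}{\sqrt{|f''(n_v)|}}\,e(\pm 1/8).$$
This yields
$$I_v = \frac{g(n_v)}{\sqrt{|f''(n_v)|}}\,e\bigl(f(n_v)-vn_v + \tfrac{1}{8}\bigr) + O(\text{secondary error}),$$
and summing the cubic remainders over $v\in(\alpha,\beta]$ is what produces the $U^{-1}G(b-a+D)$ term. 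For $v\notin[\alpha,\beta]$, the phase has no critical point and $|\phi_v'(x)|\geqslant\min(|v-\alpha|,|v-\beta|)$; a single integration by parts (equivalently the first-derivative estimate of Lemma~\ref{1-2-derivative}) then gives $I_v\ll G/\min(|v-\alpha|,|v-\beta|)$, whose sum over $v$ produces the $G\log(\beta-\alpha+2)$ contribution.

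The main obstacle, as always with this type of lemma, will be the careful endpoint bookkeeping: the Euler--Maclaurin boundary terms $\psi(a)F(a)$, $\psi(b)F(b)$ and the integrals $I_v$ for $v$ within $O(1)$ of $\alpha$ or $\beta$ must be matched against one another so that the final error takes precisely the stated shape $G\min(\sqrt{D},1/\|\alpha\|) + G\min(\sqrt{D},1/\|\beta\|)$. This trade-off between the Fresnel half-width $\sqrt{D}$ and the distances $\|\alpha\|,\|\beta\|$ to the nearest integer is the subtlest point, and requires a uniform saddle-point analysis that remains valid as $n_v$ approaches an endpoint of $[a,b]$; elsewhere the routine stationary-phase and first-derivative estimates suffice.
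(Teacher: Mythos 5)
Your plan is the standard truncated--Poisson--plus--stationary--phase argument, which is exactly the proof of the source the paper cites for this lemma (Theorem 3.1 of Karatsuba--Voronin; the same result is Lemma 3.6 of Graham--Kolesnik), so in approach you are aligned with the paper, which offers no proof of its own beyond the citation. One step of your sketch would fail as literally written: for $v\notin[\alpha,\beta]$ the bound $I_v\ll G/\min(|v-\alpha|,|v-\beta|)$ from a single integration by parts is not summable over all integers $v$, so it cannot by itself produce the $G\log(\beta-\alpha+2)$ term. The standard repair is to reserve the first-derivative bound for the $O(\beta-\alpha+2)$ values of $v$ nearest to $[\alpha,\beta]$ (these give the logarithm), and for distant $v$ either integrate by parts a second time, or use $|f''|\sim 1/D$ to bound the post-integration-by-parts integral by $O\bigl(G(b-a)D^{-1}(v-\beta)^{-2}\bigr)$, which sums to $O(G)$; moreover the boundary terms $g(b)e(f(b)-vb)/(2\pi i(f'(b)-v))$ produced by these integrations by parts do not vanish individually but must be summed over $v$ with cancellation, and it is this conditionally convergent sum (not only the near-stationary $I_v$) that yields the $G\min\bigl(\sqrt{D},1/\|\beta\|\bigr)$ endpoint term. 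With that bookkeeping supplied, the rest of your outline (Fresnel evaluation at $n_v$, cubic remainder giving $U^{-1}G(b-a+D)$) is correct and matches the cited proof.
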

\begin{proof}
See Theorem 3.1 Karatsuba and Voronin \cite{Karatsuba-Voronin-1992}.
\end{proof}

\begin{lemma}\label{Estimate-min}
Suppose that $f(n)\ll \mathcal{Q},\,f'(n)\gg \Delta$ for $n\sim N$. Then we have
\begin{equation*}
  \sum_{n\sim N}\min \left(\mathcal{D},\frac{1}{\|f(n)\|}\right)\ll(\mathcal{Q}+1)(\mathcal{D}+\Delta^{-1})\log(2+\Delta^{-1}).
\end{equation*}
\end{lemma}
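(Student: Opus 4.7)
My plan is to carry out the standard dyadic decomposition in the size of $\|f(n)\|$, converting the fractional-part condition into a counting problem, which is then handled by the mean spacing of the values $\{f(n):n\sim N\}$ forced by the lower bound on $|f'|$.

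First, the hypothesis $f'(n)\gg\Delta$ keeps $f'$ of constant sign on $n\sim N$, so $f$ is strictly monotonic there and consecutive values $f(n+1)-f(n)$ have magnitude $\gg\Delta$. Hence for any interval $I\subset\mathbb{R}$,
\[
  \#\bigl\{n\sim N:f(n)\in I\bigr\}\ll 1+\frac{|I|}{\Delta}.
\]
Combined with $|f(n)|\leqslant\mathcal{Q}$, which localizes $\{f(n)\}$ within $O(\mathcal{Q}+1)$ intervals of length $2L$ centered at integers in $[-\mathcal{Q},\mathcal{Q}]$, this yields
\[
  \#\bigl\{n\sim N:\|f(n)\|\leqslant L\bigr\}\ll(\mathcal{Q}+1)\Bigl(1+\frac{L}{\Delta}\Bigr)\qquad(0<L\leqslant \tfrac{1}{2}).
\]

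Next I would split
\[
  \sum_{n\sim N}\min\Bigl(\mathcal{D},\frac{1}{\|f(n)\|}\Bigr)
  =\sum_{\substack{n\sim N\\\|f(n)\|\leqslant\mathcal{D}^{-1}}}\mathcal{D}
  +\sum_{0\leqslant k\leqslant\log_{2}\mathcal{D}}\,\sum_{\substack{n\sim N\\2^{-k-1}<\|f(n)\|\leqslant 2^{-k}}}\frac{1}{\|f(n)\|}.
\]
The first sum, via the counting estimate with $L=\mathcal{D}^{-1}$, is $\ll(\mathcal{Q}+1)(\mathcal{D}+\Delta^{-1})$. In the $k$-th dyadic block the counting estimate with $L=2^{-k}$ produces $\ll(\mathcal{Q}+1)(1+2^{-k}/\Delta)$ summands, each of size $\ll 2^{k+1}$, so that block contributes $\ll(\mathcal{Q}+1)(2^{k}+\Delta^{-1})$.

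Finally, summing on $k$, the geometric part $\sum_{k\leqslant\log_{2}\mathcal{D}}2^{k}\ll\mathcal{D}$ absorbs into the $\mathcal{D}$ term, while the naive sum of $\Delta^{-1}$ produces a factor $\log\mathcal{D}$. To replace this by the sharper $\log(2+\Delta^{-1})$ claimed in the lemma, I would split the range of $k$ at $k_{0}=\lfloor\log_{2}(1/\Delta)\rfloor$: for $k\leqslant k_{0}$ the bound $2^{-k}/\Delta\geqslant 1$ is binding, contributing $\ll(\mathcal{Q}+1)\Delta^{-1}\log(2+\Delta^{-1})$ in total, while for $k>k_{0}$ only the trivial count $O(\mathcal{Q}+1)$ is needed and the geometric sum is $\ll(\mathcal{Q}+1)\mathcal{D}$. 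Assembling all pieces yields exactly $(\mathcal{Q}+1)(\mathcal{D}+\Delta^{-1})\log(2+\Delta^{-1})$. No step is genuinely difficult; the only mildly delicate point is this final bookkeeping of the logarithmic factor, which must be split at the natural scale $2^{-k}\sim\Delta$ in order not to lose a $\log\mathcal{D}$.
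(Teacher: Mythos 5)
Your proof is correct: the spacing bound $\#\{n\sim N:f(n)\in I\}\ll 1+|I|/\Delta$ coming from $|f'|\gg\Delta$, combined with the $O(\mathcal{Q}+1)$ possible nearest integers and the dyadic decomposition in $\|f(n)\|$ (with the cut at the scale $2^{-k}\sim\Delta$, which is indeed what saves the factor $\log\mathcal{D}$ and yields $\log(2+\Delta^{-1})$), is exactly the standard argument for this lemma. The paper itself gives no proof, only a citation to Kr\"atzel's Lemma 2.8, which proceeds along the same lines, so your write-up is essentially the same approach and serves as a complete, self-contained substitute for that reference.
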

\begin{proof}
See Lemma 2.8 of Kr\"{a}tzel \cite{Kratzel-1988}.  $\hfill$
\end{proof}

\section{ Exponential Sum Estimate Over Primes}

Suppose that $1/2<\gamma_2<\gamma_1<1,\,\,m\sim M,\,\,n\sim N,\,\,mn\sim X$. $h_1,h_2$ are integers satisfying  $1\leqslant |h_1|\leqslant X^{1-\gamma_1+\varepsilon},\,\,1\leqslant |h_2|\leqslant X^{1-\gamma_2+\varepsilon}$. Define
\begin{align*}
S_{I}(M,N) := & \,\, \sum_{M<m\leqslant2M}a(m)\sum_{N<n\leqslant2N}e\left(h_1(mn)^{\gamma_1}
+h_2(mn)^{\gamma_2}\right),\\
S_{II}(M,N):= & \,\,\sum_{M<m\leqslant2M}a(m)\sum_{N<n\leqslant2N}b(n)e\left(h_1(mn)^{\gamma_1}
+h_2(mn)^{\gamma_2}\right).
\end{align*}
where $a(m)$ and $b(n)$ are complex numbers satisfying $a(m)\ll1,\,\,b(n)\ll1$. For convenience, we write $\mathcal{R}=|h_1|X^{\gamma_1}+|h_2|X^{\gamma_2}$. Trivially, there holds $X^{\gamma_1}\ll\mathcal{R}\ll X^{1+\varepsilon}$.
\begin{lemma}\label{S(II)}
Suppose that $a(m)\ll 1,\,\,b(n)\ll 1$. Then, for $X^{2/11+8\varepsilon}\ll N\ll X^{16/11-24\varepsilon}\mathcal{R}^{-1}$,
we have
\begin{equation*}
S_{II}(M,N)\ll X^{\gamma_1+\gamma_2-1-4\varepsilon}.
\end{equation*}
\end{lemma}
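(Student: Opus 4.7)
My plan is to apply Cauchy--Schwarz in the variable $m$, then Weyl's shift inequality (Lemma \ref{Weyl inequality}) in the variable $n$ with a parameter $Q\in[1,N]$, and finally to estimate the resulting family of one-dimensional exponential sums in $m$ by Lemma \ref{1-2-derivative} (using Lemma \ref{S_k(M)-Zhai} as a safety net against cancellation in the second derivative).

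Starting from $|S_{II}|^2\leqslant M\sum_{m\sim M}|\sum_{n\sim N}b(n)e(h_1(mn)^{\gamma_1}+h_2(mn)^{\gamma_2})|^2$ and invoking Lemma \ref{Weyl inequality}, the diagonal $q=0$ term contributes $\ll M^2N^2/Q$. For the off-diagonal, interchanging summation produces the phases
\[
g_{n,q}(m):=h_1m^{\gamma_1}\bigl[(n+q)^{\gamma_1}-n^{\gamma_1}\bigr]+h_2m^{\gamma_2}\bigl[(n+q)^{\gamma_2}-n^{\gamma_2}\bigr],
\]
which are two-term generalized polynomials in $m$ for which the parameter $R$ of Lemma \ref{S_k(M)-Zhai} satisfies $R\asymp q\mathcal{R}/N$. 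A routine derivative computation using $(n+q)^{\gamma_j}-n^{\gamma_j}\asymp qN^{\gamma_j-1}$ gives $|g_{n,q}'(m)|\asymp q\mathcal{R}/X$ and $|g_{n,q}''(m)|\asymp q\mathcal{R}/(MX)$ (the lower bound holding up to possible cancellation when $h_1h_2<0$). Either Lemma \ref{1-2-derivative} or, when cancellation threatens, Lemma \ref{S_k(M)-Zhai} with $k=2$ then yields
\[
\Bigl|\sum_{m\sim M}e(g_{n,q}(m))\Bigr|\ll \sqrt{q\mathcal{R}/N}+M\sqrt{N/(q\mathcal{R})}.
\]

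Summing this pointwise bound over $n\sim N$ and $1\leqslant q\leqslant Q$, inserting into the Weyl decomposition, and taking a square root produces
\[
|S_{II}|\ll XQ^{-1/2}+X^{1/2}N^{1/4}(Q\mathcal{R})^{1/4}+XN^{1/4}(Q\mathcal{R})^{-1/4}.
\]
I then set $Q=\min\{N,\,X^{2/3}(N\mathcal{R})^{-1/3}\}$ to balance the first two terms. For the interior choice $Q=X^{2/3}(N\mathcal{R})^{-1/3}$ (valid when $N\geqslant (X^2/\mathcal{R})^{1/4}$), the first two terms collapse to $X^{2/3}(N\mathcal{R})^{1/6}$, and this is $\ll X^{\gamma_1+\gamma_2-1-4\varepsilon}$ under the hypothesis $N\mathcal{R}\ll X^{16/11-24\varepsilon}$, since $\gamma_1+\gamma_2>21/11$ is equivalent to $6(\gamma_1+\gamma_2)-10>16/11$. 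For the boundary choice $Q=N$ (forced when $N<(X^2/\mathcal{R})^{1/4}$), the dominant term becomes $XN^{-1/2}$, and the constraint $XN^{-1/2}\ll X^{\gamma_1+\gamma_2-1-4\varepsilon}$ matches the lower bound $N\gg X^{2/11+8\varepsilon}$. The remaining term $XN^{1/4}(Q\mathcal{R})^{-1/4}$ is dominated in both regimes by using the crude estimate $\mathcal{R}\geqslant X^{\gamma_1}>X^{21/22}$.

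The main obstacle will be ensuring uniformity of the inner $m$-sum bound when $h_1h_2<0$ and $|h_1|X^{\gamma_1}\asymp|h_2|X^{\gamma_2}$, where $g_{n,q}''$ can cancel and Lemma \ref{1-2-derivative} fails to apply with $\lambda_2=q\mathcal{R}/(MX)$; this is exactly where Lemma \ref{S_k(M)-Zhai} becomes indispensable, though its slightly weaker $MR^{-1/3}$ tail (in place of $MR^{-1/2}$) then forces one to use $\mathcal{R}>X^{21/22}$ to verify that the third $Q$-dependent term remains admissible. The remaining delicate point is checking that the two regimes of $Q$ cover the full range $X^{2/11+8\varepsilon}\ll N\ll X^{16/11-24\varepsilon}\mathcal{R}^{-1}$ seamlessly across the threshold $N=(X^2/\mathcal{R})^{1/4}$.
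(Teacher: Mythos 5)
Your proposal is correct and follows essentially the same route as the paper: Cauchy--Schwarz in $m$, the Weyl shift in $n$, and then Zhai's two-term estimate (Lemma \ref{S_k(M)-Zhai}) on the resulting $m$-sums with $R=q\mathcal{R}/N$, including the correct fallback to the $MR^{-1/3}$ tail when $h_1h_2<0$ kills the second-derivative lower bound. The only cosmetic difference is that you optimize $Q$ over two regimes, whereas the paper simply fixes $Q=\lfloor X^{2/11+8\varepsilon}\rfloor$ (which the hypothesis $N\gg X^{2/11+8\varepsilon}$ makes admissible throughout); both choices close the argument.
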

\begin{proof}
Let $Q=\lfloor X^{2/11+8\varepsilon}\rfloor=o(N)$. By Cauchy's inequality and Lemma \ref{Weyl inequality}, we have
\begin{align*}
|S_{II}(M,N)|^2
& \ll M\sum_{M<m\leqslant2M}\left|\sum_{N<n\leqslant2N}b(n)e\left(h_1(mn)^{\gamma_1}+h_2(mn)^{\gamma_2}\right)\right|^2\\
& \ll \frac{M^2N^2}{Q}+\frac{MN}{Q}\sum_{1\leqslant q\leqslant Q}\sum_{N<n\leqslant2N-q}\left|\sum_{M<m\leqslant2M}e\left(f(m,n)\right)\right|,
\end{align*}
where
\begin{align}
& f(m,n)=h_1m^{\gamma_1}\Delta(n,q;\gamma_1)+h_2m^{\gamma_2}\Delta(n,q;\gamma_2),\label{f(m,n)}\\
& \Delta(n,q;\gamma_i)=(n+q)^{\gamma_i}-n^{\gamma_i}(i=1,2).\label{Delta(n,q;gamma)}
\end{align}
Taking $k=2,\,(a_1,a_2,u_1,u_2,\alpha_1,\alpha_2)
=\left(h_1\Delta(n,q;\gamma_1),h_2\Delta(n,q;\gamma_2),0,0,\gamma_1,\gamma_2\right)$ in Lemma \ref{S_k(M)-Zhai}, it is easy to see that $R=qN^{-1}\mathcal{R}$, which combined with (\ref{2-derivative-estimate}) of Lemma \ref{S_k(M)-Zhai} yields
\begin{align*}
    & \,\, \sum_{1\leqslant q\leqslant Q}
           \sum_{N<n\leqslant2N}\left(\left(\frac{q\mathcal{R}}{N}\right)^{1/2}
           +M\left(\frac{q\mathcal{R}}{N}\right)^{-1/3}\right)
                  \nonumber \\
\ll & \,\, N^{1/2}Q^{3/2}\mathcal{R}^{1/2}+MN^{4/3}Q^{2/3}\mathcal{R}^{-1/3}
           \ll  X+X^{53/33-8\varepsilon/3}R^{-2/3}\ll X.
\end{align*}
This completes the proof of Lemma \ref{S(II)}. $\hfill$
\end{proof}

\begin{lemma}\label{S(I)}
Suppose that $a(m)\ll1$, $M\ll X^{17/11-21\varepsilon}\mathcal{R}^{-1}$.
Then we have
\begin{equation*}
S_I(M,N)\ll X^{\gamma_1+\gamma_2-1-4\varepsilon}.
\end{equation*}
\end{lemma}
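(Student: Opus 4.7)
The sum $T(m) := \sum_{N<n\leq 2N} e(h_1 m^{\gamma_1} n^{\gamma_1} + h_2 m^{\gamma_2} n^{\gamma_2})$ is a one-variable exponential sum in $n$ whose coefficients depend on $m$ only through powers $m^{\gamma_i}$. Since $mn\sim X$, the effective sizes of the coefficients are $|h_i| m^{\gamma_i} N^{\gamma_i} \asymp |h_i| X^{\gamma_i}$, so the ``$R$-parameter'' of Lemma~\ref{S_k(M)-Zhai} applied to the $n$-sum is $\asymp \mathcal{R}$. The plan is to split the analysis according to whether $\mathcal{R}$ is small or large relative to $N$.

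In the regime $\mathcal{R} \leq \eta N$, Lemma~\ref{S_k(M)-Zhai} with $k=2$, $(a_1,a_2,u_1,u_2,\alpha_1,\alpha_2)=(h_1 m^{\gamma_1}, h_2 m^{\gamma_2},0,0,\gamma_1,\gamma_2)$ and (\ref{1-derivative-estimate}) yields $T(m) \ll N \mathcal{R}^{-1/2}$, and summing over $m$ with $|a(m)|\ll 1$ gives $S_I(M,N) \ll X \mathcal{R}^{-1/2}$. Since $\gamma_2<\gamma_1$ and $\gamma_1+\gamma_2>21/11$ force $\gamma_1 > 21/22$, one has $\mathcal{R} \geq X^{\gamma_1} \geq X^{21/22}$, so $S_I \ll X^{1-\gamma_1/2}$. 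A routine numerical comparison then confirms $1-\gamma_1/2 < \gamma_1+\gamma_2-1-4\varepsilon$ for $\varepsilon$ sufficiently small, as required.

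The regime $\mathcal{R} \gg N$ is the main obstacle: inequality (\ref{2-derivative-estimate}) of Lemma~\ref{S_k(M)-Zhai} only delivers $T(m)\ll \mathcal{R}^{1/2}$, and the resulting $M\mathcal{R}^{1/2} \ll X^{17/11-21\varepsilon}\mathcal{R}^{-1/2}$ already exceeds the target when $\mathcal{R}$ is close to $X$. To bypass this, I would apply the B-process via Lemma~\ref{Inversion-Formula} to $T(m)$ with $g(n)=1$ and $f(n) = h_1 m^{\gamma_1} n^{\gamma_1} + h_2 m^{\gamma_2} n^{\gamma_2}$. Here $|f''(n)| \asymp \mathcal{R}/N^2$, $|f'''(n)| \asymp \mathcal{R}/N^3$, so that $D \asymp N^2/\mathcal{R}$, $U \asymp N$, $G=1$, and the image $[\alpha,\beta]$ of $[N,2N]$ under $f'$ has length $\asymp \mathcal{R}/N$. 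This transforms $T(m)$ into a dual sum of $\asymp \mathcal{R}/N$ terms of amplitude $\asymp N/\sqrt{\mathcal{R}}$ with phase $\Psi(m,v) = f(n_v) - v n_v$, plus an error containing $\min(\sqrt{D}, 1/\|\alpha\|)$-type expressions in the endpoints.

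To sum the error over $m$, I would invoke Lemma~\ref{Estimate-min} with $\mathcal{D}=N/\sqrt{\mathcal{R}}$, $\mathcal{Q}\asymp \mathcal{R}/N$, and $\Delta\asymp \mathcal{R}/X$ (arising from $\alpha'(m)\asymp \mathcal{R}/X$), producing a contribution $\ll (\sqrt{\mathcal{R}}+M)\mathscr{L}$ which is comfortably within the target under $M \ll X^{17/11-21\varepsilon}\mathcal{R}^{-1}$ and $\mathcal{R}\leq X^{1+\varepsilon}$. For the main dual sum, I would interchange the order of summation and estimate the $m$-sum for each fixed $v$ using Lemma~\ref{S_k(M)-Zhai} once more: by the envelope theorem, $\Psi_m = h_1\gamma_1 m^{\gamma_1-1}n_v^{\gamma_1} + h_2\gamma_2 m^{\gamma_2-1}n_v^{\gamma_2}$ preserves the same two-term structure, with $n_v$ ranging over $[N,2N]$. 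The chief technical hurdle is to carry out these derivative bookkeeping steps precisely enough so that, after multiplying by the amplitude $N/\sqrt{\mathcal{R}}$ and summing over $v\in (\alpha,\beta]$, the combined estimate fits below $X^{\gamma_1+\gamma_2-1-4\varepsilon}$ for all admissible $M$ in the stated range.
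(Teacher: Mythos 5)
Your plan for the main regime has a genuine quantitative gap: applying the B-process directly to the $n$-sum and then estimating the dual $m$-sum for each fixed $v$ cannot reach the target. After Lemma \ref{Inversion-Formula} the dual sum has $\asymp \mathcal{R}/N$ values of $v$ with amplitude $\asymp N\mathcal{R}^{-1/2}$, and the best you can hope for from a second-derivative test (or from Lemma \ref{S_k(M)-Zhai}) on the $m$-sum is $\ll M|\Psi_{mm}|^{1/2}+|\Psi_{mm}|^{-1/2}\ll \mathcal{R}^{1/2}+M\mathcal{R}^{-1/2}$, since $|\Psi_{mm}|\asymp \mathcal{R}/M^{2}$ at best. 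Multiplying out gives $S_I\ll \mathcal{R}+M$, and the term $\mathcal{R}$ is fatal: when $|h_1|$ is near its maximum $X^{1-\gamma_1+\varepsilon}$ one has $\mathcal{R}\asymp X^{1+\varepsilon}$, whereas the target $X^{\gamma_1+\gamma_2-1-4\varepsilon}$ is smaller than $X$ because $\gamma_1+\gamma_2<2$; the deficit can be as large as $X^{1/11}$. This is exactly why the paper does \emph{not} apply the B-process to $S_I$ directly: it first applies Cauchy and the Weyl--van der Corput inequality (Lemma \ref{Weyl inequality}) in the $n$-variable with $Q=\lfloor X^{2/11+8\varepsilon}\rfloor$, which replaces the phase by the differenced phase $h_1m^{\gamma_1}\Delta(n,q;\gamma_1)+h_2m^{\gamma_2}\Delta(n,q;\gamma_2)$ of size $q\mathcal{R}/N$ and buys the extra factor $Q^{-1/2}$; only then is the B-process in $n$ followed by the second-derivative test in $m$ applied. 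The choice of $Q$ is precisely what produces the threshold $\gamma_1+\gamma_2>21/11$ (via $M^2N^2/Q\ll X^{2(\gamma_1+\gamma_2-1)-8\varepsilon}$), so the differencing step is not a dispensable refinement but the source of the exponent.

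Two further points. First, your assertion that $\Psi_m=h_1\gamma_1m^{\gamma_1-1}n_v^{\gamma_1}+h_2\gamma_2m^{\gamma_2-1}n_v^{\gamma_2}$ ``preserves the same two-term structure'' so that Lemma \ref{S_k(M)-Zhai} applies again is not correct as stated: $n_v=n_v(m)$ is only defined implicitly by $f_n(m,n)=v$ and depends on $m$, so the dual phase is not a sum of two monomials in $m$ and the lemma is inapplicable; one must instead bound $\Psi_{mm}=(f_{mm}f_{nn}-f_{mn}^{2})/f_{nn}$ from below directly. Second, that lower bound is itself nontrivial when $h_1h_2<0$: the relevant quantity is a quadratic form $A_1s^{2}+B_1st+C_1t^{2}$ in $s=h_1m^{\gamma_1}n^{\gamma_1}$, $t=h_2m^{\gamma_2}n^{\gamma_2}$ which can degenerate, and the paper has to excise (by a divisor argument, its Case 0) the $O(X^{\gamma_1+\gamma_2-1-4\varepsilon})$ pairs $(m,n)$ on which a linear combination $|as+bt|$ is small. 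Your write-up does not address this degeneracy, and without it the claimed $\Delta\asymp\mathcal{R}/X$ and the lower bound on $\Psi_{mm}$ are unjustified. Your first regime ($\mathcal{R}\leqslant\eta N$, first-derivative estimate) is fine, and is an acceptable substitute for the paper's opening case $M\ll X^{10/11-4\varepsilon}\mathcal{R}^{-1/2}$, but the main case as proposed does not close.
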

\begin{proof}
Taking $k=2,\,\,(a_1,a_2,u_1,u_2,\alpha_1,\alpha_2)
=\left(h_1m^{\gamma_1},h_2m^{\gamma_2},0,0,\gamma_1,\gamma_2\right)$ in Lemma \ref{S_k(M)-Zhai}, it is easy to see that $R=\mathcal{R}$. If $M\ll X^{10/11-4\varepsilon}\mathcal{R}^{-1/2}$, by (\ref{2-derivative-estimate}) of Lemma \ref{S_k(M)-Zhai}, we deduce that
\begin{align*}
S_I(M,N)
& \ll\sum_{M<m\leqslant2M}\left|\sum_{N<n\leqslant2N}e\left(h_1(mn)^{\gamma_1}+h_2(mn)^{\gamma_2}\right)\right|\\
& \ll \sum_{M<m\leqslant2M}\left(\mathcal{R}^{1/2}+N\mathcal{R}^{-1/3}\right)
\ll M\mathcal{R}^{1/2}+MN\mathcal{R}^{-1/3}
\ll X^{\gamma_1+\gamma_2-1-4\varepsilon}.
\end{align*}
Later, we always suppose that $M\gg X^{10/11-4\varepsilon}\mathcal{R}^{-1/2}$. Taking $Q=\lfloor X^{2/11+8\varepsilon}\rfloor$, by Cauchy's inequality and Lemma \ref{Weyl inequality}, we obtain
\begin{align*}
|S_I(M,N)|^2
& \ll M\sum_{M<m\leqslant2M}\left|\sum_{N<n\leqslant2N}b(n)\overline{b}(n+1)e\left(f(m,n)\right)\right|^2\\
& \ll \frac{M^2N^2}{Q}+\frac{MN}{Q}\sum_{1\leqslant q\leqslant Q}\left|\sum_{M<m\leqslant2M}\sum_{N<n\leqslant2N-q}e\left(f(m,n)\right)\right|,
\end{align*}
where $f(m,n)$ is defined by (\ref{f(m,n)}). It suffices to show that
\begin{equation}\label{S(X,Q)}
\mathcal{S}(X,Q):=\sum_{1\leqslant q\leqslant Q}\left|\sum_{M<m\leqslant2M}\sum_{N<n\leqslant2N-q}e\left(f(m,n)\right)\right|\ll X.
\end{equation}
Next, we shall prove (\ref{S(X,Q)}) in five cases.

\noindent
\textbf{Case 0.}
For $a,b>0$, let $N(a,b)$ denote the number of solutions of the inequality
\begin{equation}\label{case 0}
  |ah_1(mn)^{\gamma_1}+bh_2(mn)^{\gamma_2} |\leqslant \frac{\mathcal{R}}{Q^{1/2}\mathscr{L}},
\end{equation}
with $m\sim M, n\sim N$. Suppose that $\eta\in(0,1)$ is a sufficiently small positive constant. Then we can prove
that
\begin{equation}\label{N(a,b)}
  N(a,b)\ll_{\eta} X^{\gamma_1+\gamma_2-1-4\varepsilon}
\end{equation}
holds for $a,b\in [\eta,1/\eta]$ uniformly. If $h_1h_2>0$, then $N(a,b)=0$.
Now we assume that $h_1h_2<0$. If $(m,n)$ satisfies the inequality (\ref{case 0}), then
\begin{align*}
ah_1(mn)^{\gamma_1}
& =-bh_2(mn)^{\gamma_2}+O\left(\frac{\mathcal{R}}{Q^{1/2}\mathscr{L}}\right)
 =-bh_2(mn)^{\gamma_2}\left(1+O\left(\frac{1}{Q^{1/2}\mathscr{L}}\right)\right),
\end{align*}
which implies that
\begin{align*}
mn & = \left(-\frac{bh_2}{ah_1}\right)^{\frac{1}{\gamma_1-\gamma_2}}
     \left(1+O\left(\frac{1}{Q^{1/2}\mathscr{L}}\right)\right)
      =\left(-\frac{bh_2}{ah_1}\right)^{\frac{1}{\gamma_1-\gamma_2}}+O\left(\frac{X}{Q^{1/2}\mathscr{L}}\right).
\end{align*}
Thus (\ref{N(a,b)}) follows from a divisor argument. We shall explain the reason why we investigate this case in Case 4.

\noindent
\textbf{Case 1.} If $\left|\frac{\partial f(m,n)}{\partial m}\right|\leqslant \frac{1}{500}$, taking  $k=2$ and $$\left(a_1,a_2,u_1,u_2,\alpha_1,\alpha_2\right)
=(h_1\Delta(n,q;\gamma_1),h_2\Delta(n,q;\gamma_2),0,0,\gamma_1,\gamma_2)$$
in Lemma \ref{S_k(M)-Zhai}, then we have $R=qN^{-1}\mathcal{R}$. By (\ref{1-derivative-estimate}) of Lemma \ref{S_k(M)-Zhai}, we get
\begin{equation*}
\mathcal{S}(X,Q)\ll \sum_{1\leqslant q\leqslant Q}\sum_{N<n\leqslant2N-q}M\left(\frac{q\mathcal{R}}{N}\right)^{-1/2}
\ll MN^{3/2}Q^{1/2}\mathcal{R}^{-1/2}\ll X^{25/22+2\varepsilon}\mathcal{R}^{-1/4}\ll X.
\end{equation*}

\noindent
\textbf{Case 2.} If $\left|\frac{\partial f(m,n)}{\partial n}\right|\leqslant \frac{1}{500}$, it is easy to see that
\begin{equation*}
\frac{\partial f(m,n)}{\partial n}=\gamma_1h_1m^{\gamma_1}\Delta(n,q;\gamma_1-1)+\gamma_2h_2m^{\gamma_2}\Delta(n,q;\gamma_2-1),
\end{equation*}
where
\begin{equation}\label{Delta-n-q-gamma}
\Delta(n,q;\gamma_i-1)=(n+q)^{\gamma_i-1}-n^{\gamma_i-1}=(\gamma_i-1) qn^{\gamma_i-2}+O(q^2n^{\gamma_i-3}),\quad(i=1,2).
\end{equation}
Thus, we obtain
\begin{equation*}
\frac{\partial f(m,n)}{\partial n}=\gamma_1(\gamma_1-1)h_1m^{\gamma_1}qn^{\gamma_1-2}
+\gamma_2(\gamma_2-1)h_2m^{\gamma_2}qn^{\gamma_2-2}
+O\left(\frac{q^2}{N^3}\mathcal{R}\right).
\end{equation*}
If $h_1h_2>0$, we have $\left|\frac{\partial f(m,n)}{\partial n}\right|\sim qN^{-2}\mathcal{R}<1/2$. Thus by (\ref{1-derivative}) of Lemma \ref{1-2-derivative}, we derive that
\begin{equation}\label{Case2-h1h2>0}
\mathcal{S}(X,Q)
\ll\sum_{1\leqslant q\leqslant Q}\sum_{M<m\leqslant 2M}N^2q^{-1}\mathcal{R}^{-1}
\ll MN^2\mathcal{R}^{-1}\mathscr{L}\ll X^{12/11+5\varepsilon}\mathcal{R}^{-1/2}\ll X.
\end{equation}
Now we consider $h_1h_2<0$. Let $\delta=(Q\mathcal{R})^{1/2}N^{-3/2}\mathscr{L}^{-1/2}$. Define
\begin{align*}
 I_1=\left\{n:n\in[N,2N-q],\,\left|\frac{\partial f(m,n)}{\partial n}\right|\leqslant \delta\right\},\,\,
 I_2=\left\{n:n\in[N,2N-q],\,\left|\frac{\partial f(m,n)}{\partial n}\right|> \delta\right\} .
\end{align*}
If $n\in I_1$, then we have
\begin{align*}
\gamma_1(\gamma_1-1)h_1m^{\gamma_1}qn^{\gamma_1-2}
& =-\gamma_2(\gamma_2-1)h_2m^{\gamma_2}qn^{\gamma_2-2}
+O\left(\delta+\frac{q^2}{N^3}\mathcal{R}\right)\\
& =-\gamma_2(\gamma_2-1)h_2m^{\gamma_2}qn^{\gamma_2-2}
\left(1+O\left(\frac{\delta N^2}{q\mathcal{R}}+\frac{q}{N}\right)\right),
\end{align*}
which implies that
\begin{align*}
n & =\left(-\frac{\gamma_2(\gamma_2-1)h_2m^{\gamma_2}}
{\gamma_1(\gamma_1-1)h_1m^{\gamma_1}}\right)^{\frac{1}{\gamma_1-\gamma_2}}
\left(1+O\left(\frac{\delta N^2}{q\mathcal{R}}+\frac{q}{N}\right)\right)\\
 & =\left(-\frac{\gamma_2(\gamma_2-1)h_2m^{\gamma_2}}
{\gamma_1(\gamma_1-1)h_1m^{\gamma_1}}\right)^{\frac{1}{\gamma_1-\gamma_2}}
+O\left(\frac{\delta N^3}{q\mathcal{R}}+q\right).
\end{align*}
By (\ref{1-derivative}) of Lemma \ref{1-2-derivative}, one has
\begin{align}\label{Case2-h1h2<0}
           \mathcal{S}(X,Q)
\ll & \,\, \sum_{1\leqslant q\leqslant Q}\sum_{M<m\leqslant2M}\left(\sum_{n\in I_1}e\left(f(m,n)\right)
           +\sum_{n\in I_2}e\left(f(m,n)\right)\right)
                  \nonumber\\
\ll & \,\, \sum_{1\leqslant q\leqslant Q}\sum_{M<m\leqslant2M}\left(\frac{\delta N^3}{q\mathcal{R}}
             +q+\frac{1}{\delta}\right)
\ll  MN^{3/2}Q^{1/2}\mathcal{R}^{-1/2}\mathscr{L}^{1/2}+MQ^2
                  \nonumber\\
\ll & \,\,X^{25/22+7\varepsilon}\mathcal{R}^{-1/4}+X^{21/11-5\varepsilon}\mathcal{R}^{-1}\ll X.
\end{align}
Combining (\ref{Case2-h1h2>0}) and (\ref{Case2-h1h2<0}), we complete the estimate of Case $2$.

\noindent
\textbf{Case 3.}
For some $i,\,j$ satisfying $2\leqslant i+j\leqslant3$ and
\begin{equation}\label{mix-derivative}
\left|\frac{\partial^{i+j}f(m,n)}{\partial m^i\partial n^j}\right|\leqslant \frac{q\mathcal{R}}{QM^iN^{j+1}},
\end{equation}
taking $c(\gamma,0)=1,\,c(\gamma,j)=\gamma(\gamma-1)\cdots(\gamma+j-1)$ for $j\neq 0$, one has
\begin{equation*}
\frac{\partial^{i+j}f(m,n)}{\partial m^in^j}=c(\gamma_1,i)c(\gamma_1,j)h_1m^{\gamma_1-i}\Delta(n,q;\gamma_1-j)
+c(\gamma_2,i)c(\gamma_2,j)h_2m^{\gamma_2-i}\Delta(n,q;\gamma_2-j).
\end{equation*}
Since $c(\gamma_1,i)c(\gamma_1,j) $ and $c(\gamma_2,i)c(\gamma_2,j)$ always have the same sign, we may postulate that $h_1h_2<0$, otherwise there exists no $(m,n)$ satisfying (\ref{mix-derivative}). If $(m,n)$ satisfies (\ref{mix-derivative}), then we have
\begin{align*}
&c(\gamma_1,i)c(\gamma_1,j)h_1m^{\gamma_1-i}\Delta(n,q;\gamma_1-j)\\
=& -c(\gamma_2,i)c(\gamma_2,j)h_2m^{\gamma_2-i}\Delta(n,q;\gamma_2-j)+O\left(\frac{q\mathcal{R}}{QM^iN^{j+1}}\right)\\
=& -c(\gamma_2,i)c(\gamma_2,j)h_2m^{\gamma_2-i}\Delta(n,q;\gamma_2-j)\left(1+O\left(\frac{1}{Q}\right)\right),
\end{align*}
which implies that
\begin{align*}
m & =\left(-\frac{c(\gamma_2,i)c(\gamma_2,j)h_2\Delta(n,q;\gamma_2-j)}
{c(\gamma_1,i)c(\gamma_1,j)h_1\Delta(n,q;\gamma_1-j)}\right)^{\frac{1}{\gamma_1-\gamma_2}}
\left(1+O\left(\frac{1}{Q}\right)\right)\\
 & =\left(-\frac{c(\gamma_2,i)c(\gamma_2,j)h_2\Delta(n,q;\gamma_2-j)}
{c(\gamma_1,i)c(\gamma_1,j)h_1\Delta(n,q;\gamma_1-j)}\right)^{\frac{1}{\gamma_1-\gamma_2}}
+O\left(\frac{M}{Q}\right).
\end{align*}
Therefore, we have
\begin{equation*}
\mathcal{S}(X,Q)\ll\sum_{1\leqslant q\leqslant Q}\sum_{N<n\leqslant2N-q}\frac{M}{Q}\ll X.
\end{equation*}

\noindent
\textbf{Case 4}. We postulate that all the conditions from Case $0$ to Case $3$ do not hold. Without loss of generality, we assume that $\frac{\partial f(m,n)}{\partial n}>0$. Define
\begin{equation*} \mathcal{I}_j:=[\mathcal{A}_j(m),\mathcal{B}_j(m)]=\left\{n:n\in[N,2n-q],\,\frac{2^jq\mathcal{R}}{QN^3}<\left|\frac{\partial^2f(m,n)}{\partial n^2}\right|\leqslant \frac{2^{j+1}q\mathcal{R}}{QN^3}\ll \frac{q\mathcal{R}}{N^3}\right\},
\end{equation*}
\begin{equation*}
0\leqslant j\leqslant J_0:=\frac{\log Q}{\log 2}.
\end{equation*}
For convenience, we write $\frac{\partial f(m,n)}{\partial n}=f_{n}(m,n)$ and similar to other derivatives. If $n\in \mathcal{I}_j$, by Lemma \ref{Inversion-Formula}, we get
\begin{equation*}
\sum_{n\in\mathcal{I}_j}e\left(f(m,n)\right)=\sum_{v_1(m)<v\leqslant v_2(m)}b_v\frac{e\left(s(m,v)+1/8\right)}{\sqrt{|G(m,v)|}}+O\left(\mathcal{E}(m,q,j)\right),
\end{equation*}
where $v_1(m),\,v_2(m)$ are the images of $n\in \mathcal{I}_j$ under the mapping $y=f_n(m,n)$; $g:=g(m,v)$ is the solution of the equation $f_n(m,n)=v$; $s(m,v):=f\left(m,g(m,v)\right)-vg(m,v)$;
$G(m,v)=f_{nn}\left(m,g(m,v)\right)$ and
\begin{align*}
         \mathcal{E}(m,q,j)
= & \,\, \log X+\left(N+\frac{QN^3}{2^jq\mathcal{R}}\right)\frac{1}{N}
         +\min\left(\frac{Q^{1/2}N^{3/2}}{2^{j/2}q^{1/2}\mathcal{R}^{1/2}},\frac{1}{\|v_1(m)\|}\right)
                \nonumber\\
   & \,\,+\min\left(\frac{Q^{1/2}N^{3/2}}{2^{j/2}q^{1/2}\mathcal{R}^{1/2}},\frac{1}{\|v_2(m)\|}\right).
\end{align*}
It is easy to see that
\begin{align*}
 &1\ll\frac{q\mathcal{R}}{N^2}\ll v_1(m),v_2(m)\ll \frac{q\mathcal{R}}{N^2},\\
 & v'_1(m)=f_{nm}(m,\mathcal{B}_j)\gg\frac{q\mathcal{R}}{MN^2Q}, \qquad
   v'_2(m)=f_{nm}(m,\mathcal{A}_j)\gg\frac{q\mathcal{R}}{MN^2Q}.
\end{align*}
Taking $\mathcal{D}=(N^3Q)^{1/2}(2^jq\mathcal{R})^{-1/2},\mathcal{Q}=q\mathcal{R}N^{-2},
\Delta=q\mathcal{R}(MN^2Q)^{-1}$ in Lemma \ref{Estimate-min}, it is easy to see that the contribution of the term $\mathcal{E}(m,q,j)$ to $\mathcal{S}(X,Q)$ is
\begin{align*}
\ll & \,\,\sum_{1\leqslant q\leqslant Q}\sum_{1\leqslant j\leqslant J_0}\sum_{M<m\leqslant2M}\mathcal{E}(m,q,j)
              \nonumber \\
\ll & \,\,\sum_{1\leqslant q\leqslant Q}\sum_{1\leqslant j\leqslant J_0}\left(M\log X
      +\frac{MN^2Q}{2^jq\mathcal{R}}+
      \frac{q\mathcal{R}}{N^2}\cdot\frac{N^{3/2}Q^{1/2}}{2^{j/2}q^{1/2}\mathcal{R}^{1/2}}
      +\frac{q\mathcal{R}}{N^2}\cdot\left(\frac{q\mathcal{R}}{MN^2Q}\right)^{-1}\right)
              \nonumber \\
\ll & \,\,MQ\mathscr{L}^2+\frac{MN^2Q\mathscr{L}}{\mathcal{R}}+\frac{Q^2\mathcal{R}^{1/2}}{N^{1/2}}+MQ
              \nonumber \\
\ll & \,\,X^{19/11-12\varepsilon}\mathcal{R}^{-1}+X^{14/11+13\varepsilon}\mathcal{R}^{-1/2}
      +X^{7/11+11\varepsilon/2}\ll X.
\end{align*}
Writing $v_1=\min v_1(m),v_2=\max v_2(m)$, one has
\begin{align*}
\mathcal{S}(X,Q)
 & =\sum_{1\leqslant q\leqslant Q}\sum_{0\leqslant j\leqslant J_0}\sum_{M<m\leqslant2M}\sum_{v_1(m)<v\leqslant v_2(m)}b_v\frac{e\left(s(m,v)+1/8\right)}{\sqrt{|G(m,v)|}}+O(X)\\
 & \ll \sum_{1\leqslant q\leqslant Q}\sum_{0\leqslant j\leqslant J_0}\sum_{v_1<v\leqslant v_2}\left|\sum_{m\in I_v}\frac{e\left(s(m,v)\right)}{\sqrt{|G(m,v)|}}\right|+X,
\end{align*}
where $I_v$ is a subinterval of $[M,2M]$. Thus, we only need to estimate the sum over $m$. First, we shall prove that $|G(m,v)|^{-1/2}$ is monotonic. Differentiating the equation $f_n\left(m,g(m,v)\right)=v$ over $m$, we get
\begin{equation*}
f_{nm}(m,g)+f_{nn}(m,g)g_m(m,v)=0,
\end{equation*}
which implies that
\begin{equation}\label{gm(m,v)}
g_m(m,v)=-\frac{f_{nm}(m,g)}{f_{nn}(m,g)}.
\end{equation}
Putting (\ref{gm(m,v)}) into $G_m(m,v)$, we have
\begin{equation}\label{Gm(m,v)}
G_m(m,v)=f_{nnm}(m,g)+f_{nnn}(m,g)g_{m}(m,v)
=\frac{f_{nnm}f_{nn}-f_{nnn}f_{nm}}{f_{nn}}.
\end{equation}
We remark that the denominator $f_{nn}$ of (\ref{Gm(m,v)}) is always positive or negative. Therefore, we only need to consider $f_{nnm}f_{nn}-f_{nnn}f_{nm}$. From (\ref{Delta-n-q-gamma}), we have
\begin{align*}
f_{nm}(m,g)
 & =\gamma^2_1h_1m^{\gamma_1-1}\Delta(g,q;\gamma_1-1)+\gamma^2_2h_2m^{\gamma_2-1}\Delta(g,q;\gamma_2-1)\\
 & =\gamma^2_1(\gamma_1-1)h_1m^{\gamma_1-1}qg^{\gamma_1-2}+\gamma^2_2(\gamma_2-1)h_2m^{\gamma_2-1}qg^{\gamma_2-2}
 +O\left(\frac{q^2\mathcal{R}}{MN^3}\right)\\
 & =\left(\gamma^2_1(\gamma_1-1)h_1m^{\gamma_1-1}qg^{\gamma_1-2}+\gamma^2_2(\gamma_2-1)h_2m^{\gamma_2-1}qg^{\gamma_2-2}\right)
 \left(1+O\left(\frac{qQ}{N}\right)\right),
\end{align*}
where we use $|f_{nm}|>q\mathcal{R}Q^{-1}M^{-1}N^{-2}$.
Similarly, we have
\begin{align*}
 & f_{nn} = \left(c_{-2}(\gamma_1)h_1m^{\gamma_1}qg^{\gamma_1-3}+c_{-2}(\gamma_2)h_2m^{\gamma_2}qg^{\gamma_2-3}\right)
\left(1+O\left(\frac{qQ}{N}\right)\right),\\
 & f_{nnm}= \left(\gamma_1c_{-2}(\gamma_1)h_1m^{\gamma_1-1}qg^{\gamma_1-3}
+\gamma_2c_{-2}(\gamma_2)h_2m^{\gamma_2-1}qg^{\gamma_2-3}\right) \left(1+O\left(\frac{qQ}{N}\right)\right),\\
 & f_{nnn}= \left(c_{-3}(\gamma_1)h_1m^{\gamma_1}qg^{\gamma_1-4}
+c_{-3}(\gamma_2)h_2m^{\gamma_2}qg^{\gamma_2-4}\right) \left(1+O\left(\frac{qQ}{N}\right)\right),
\end{align*}
where $c_{-i}(\gamma)=\gamma(\gamma-1)(\gamma-2)\cdots(\gamma-i)$ for $i\geqslant 0$. For simplicity, we write $s=h_1m^{\gamma_1}g^{\gamma_1},\,t=h_2m^{\gamma_2}g^{\gamma_2}$. Then we get
\begin{equation*}
f_{nnm}f_{nn}-f_{nnn}f_{nm}=\frac{q}{mg^6}\left(As^2+2Bts+Ct^2\right)\left(1+O\left(\frac{qQ}{N}\right)\right),
\end{equation*}
where
\begin{align*}
 & A=\gamma^3_1(\gamma_1-1)^2(\gamma_1-2)<0,\quad C=\gamma^3_2(\gamma_2-1)^2(\gamma_2-2)<0,\\
 & B=\gamma_1(\gamma_1-1)\gamma_2(\gamma_2-1)(3\gamma_1\gamma_2-\gamma^2_1-\gamma^2_2-\gamma_1-\gamma_2)<0.
\end{align*}
We need to show that
\begin{equation}\label{A-B-C-Func}
As^2+2Bts+Ct^2\neq0.
\end{equation}
If $h_1h_2>0$, (\ref{A-B-C-Func}) is obvious. Now we assume that $h_1h_2<0$. It is easy to show that
\begin{equation*}
B^2-AC=\gamma^2_1(\gamma_1-1)^2\gamma^2_2(\gamma_2-1)^2(\gamma_1-\gamma_2)^2
(2\gamma_1+2\gamma_2+1+\gamma^2_1+\gamma^2_2-4\gamma_1\gamma_2)>0.
\end{equation*}
Thus there exist constants $a_1,a_2,b_1,b_2$ such that $As^2+2Bts+Ct^2=(a_1s+b_1t)(a_2s+b_2t)$.
Since $A<0,\,B<0,\,C<0$, it is easy to see that $a_1b_1>0,\,a_2b_2>0$. By noting that  $s$ and $t$ do not satisfy the condition of Case $0$, taking
\begin{equation*}
\eta=\frac{1}{2}\min\left(|a_1|,|a_2|,|b_1|^{-1},|b_2|^{-1}\right)
\end{equation*}
in Case $0$, we derive that
\begin{equation*}
|a_1s+b_1t|>\frac{\mathcal{R}}{Q^{1/2}\mathscr{L}},\qquad
|a_2s+b_2t|>\frac{\mathcal{R}}{Q^{1/2}\mathscr{L}}.
\end{equation*}
This is the reason why we consider Case $0$. Hence
\begin{equation*}
|As^2+2Bst+Ct^2|\geqslant\frac{\mathcal{R}^2}{Q\mathscr{L}^2}.
\end{equation*}
By the above arguments we know that $|G(m,v)|$ is  monotonic in $m$, so is $|G(m,v)|^{-1/2}$. Next, we shall compute $s_{mm}(m,v)$. Putting (\ref{gm(m,v)}) into $s_m(m,v)$ and $s_{mm}(m,v)$, we have
\begin{equation*}
  s_m(m,v)=f_m(m,g)+f_n(m,g)g_m-vg_m=f_m(m,g),
\end{equation*}
\begin{equation*}
  s_{mm}(m,v)=f_{mm}(m,g)+f_{mn}(m,g)g_m=\frac{f_{mm}f_{nn}-f_{mn}^2}{f_{nn}}.
\end{equation*}
Similar to $G_m$, we have
\begin{equation*}
  f_{mm}f_{nn}-f_{mn}^2=-\frac{2q^2}{m^2n^4}(A_1s^2+B_1st+C_1t^2)\left(1+O\left(\frac{Q^2}{N\mathscr{L}}\right)\right),
\end{equation*}
where
\begin{align*}
 A_1=\gamma_1^3(\gamma_1-1)^2,\qquad B_1=\gamma_1(\gamma_1-1)\gamma_2(\gamma_2-1)(\gamma_1+\gamma_2),  \qquad C_1=\gamma_2^3(\gamma_2-1)^2.
\end{align*}
Thus, we have $B_1^2-4A_1C_1 >0$. If $h_1h_2>0$, we immediately derive that
\begin{equation*}
|f_{mm}f_{nn}-f_{mn}^2|\gg\frac{q^2\mathcal{R}^2}{M^2N^4}.
\end{equation*}
If $h_1h_2<0$, similar to $G_m$, we have
\begin{equation*}
 |A_1s^2+B_1st+C_1t^2|\gg \frac{\mathcal{R}^2}{Q\mathscr{L}^2},
\end{equation*}
which implies that
\begin{equation*}
|f_{mm}f_{nn}-f_{mn}^2|\gg\frac{q^2\mathcal{R}^2}{M^2N^4Q\mathscr{L}^2}.
\end{equation*}
Combining the above arguments, we obtain
\begin{equation*}
|s_{mm}|\gg \frac{q\mathcal{R}}{M^2NQ\mathscr{L}^2}.
\end{equation*}
On the other hand, we have
\begin{equation*}
|s_{mm}|\ll|f_{mm}|+|f_{mn}g_m|\ll \frac{q\mathcal{R}}{M^2N}+\frac{q\mathcal{R}}{MN^2}\cdot\frac{N}{M}\ll\frac{q\mathcal{R}}{M^2N}.
\end{equation*}
Define
\begin{equation*}
T_{v,l}=\left\{m:m\in I_v,\,\frac{2^lq\mathcal{R}}{M^2NQ\mathscr{L}^2}<|s_{mm}|\leqslant\frac{2^{l+1}q\mathcal{R}}{M^2NQ\mathscr{L}^2}
 \ll \frac{q\mathcal{R}}{M^2N}\right\},
\end{equation*}
\begin{equation*}
 0\leqslant l\leqslant L:=\bigg\lfloor \frac{\log(Q\mathscr{L}^2)}{\log 2}\bigg\rfloor.
\end{equation*}
It follows from partial summation and (\ref{2-derivative}) of Lemma \ref{1-2-derivative} that
\begin{align*}
\mathcal{S}(X,Q)
\ll & \sum_{1\leqslant q\leqslant Q}\sum_{1\leqslant j\leqslant J_0}\sum_{v_1<v\leqslant v_2}\sum_{0\leqslant l\leqslant L}\left|\sum_{m\in T_{v,l}}\frac{e(s(m,v))}{\sqrt{|G(m,v)|}}\right|+X\\
\ll & \sum_{1\leqslant q\leqslant Q}\sum_{1\leqslant j\leqslant J_0}\sum_{v_1<v\leqslant v_2}\sum_{0\leqslant l\leqslant L}\left(\frac{N^3Q}{q\mathcal{R}}\right)^{\frac{1}{2}}\left(M\left(\frac{2^lq\mathcal{R}}{M^2NQ\mathscr{L}^2}\right)^{\frac{1}{2}}
+\left(\frac{2^lq\mathcal{R}}{M^2NQ\mathscr{L}^2}\right)^{-\frac{1}{2}}\right)+X\\
\ll & \sum_{1\leqslant q\leqslant Q}\sum_{1\leqslant j\leqslant J_0}\sum_{v_1<v\leqslant v_2}\sum_{0\leqslant l\leqslant L}(N\mathscr{L}^{-1}\sqrt{2}^l+MN^2Qq^{-1}\mathcal{R}^{-1}\mathscr{L}\sqrt{2}^{-l})+X\\
\ll & N^{-1}Q^{5/2}\mathcal{R}\mathscr{L}^2+MQ^2\mathscr{L}^2 +X\ll X^{1-\varepsilon}\mathscr{L}^2+X^{21/11-4\varepsilon}\mathcal{R}^{-1}+X\ll X.
\end{align*}
Combining the above five cases, we complete the proof of Lemma \ref{S(I)}. $\hfill$
\end{proof}

\begin{lemma}\label{Apply-H-B-identity}
Suppose that $x/2<X\leqslant x, X<X_1\leqslant2X$. Then we have
\begin{equation*}
T^*(X):=\sum_{X<n\leqslant X_1}\Lambda(n)e(h_1n^{\gamma_1}+h_2n^{\gamma_2})\ll X^{\gamma_1+\gamma_2-1-3\varepsilon}.
\end{equation*}
\end{lemma}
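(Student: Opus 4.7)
The plan is to invoke a Heath--Brown-type identity for $\Lambda(n)$ and reduce $T^*(X)$ to a combination of the bilinear sums handled by Lemmas~\ref{S(I)} and \ref{S(II)}. First, I would apply Heath--Brown's identity at some fixed level $k\geqslant 4$ with splitting parameter $z=X^{1/k}$, writing for $n\in(X,X_1]$
\[
\Lambda(n)=\sum_{j=1}^{k}(-1)^{j-1}\binom{k}{j}\sum_{\substack{n=n_1\cdots n_{2j}\\ n_{j+1},\ldots,n_{2j}\leqslant z}}(\log n_1)\mu(n_{j+1})\cdots\mu(n_{2j}).
\]
Substituting this into $T^*(X)$, performing a dyadic decomposition on each variable $n_i\sim N_i$, and removing the $\log n_1$ weight via partial summation reduces $T^*(X)$, up to a $\mathscr{L}^{O(1)}\leqslant X^{\varepsilon}$ factor, to a bounded number of multilinear sums whose dyadic parameters $(N_1,\ldots,N_{2j})$ satisfy $N_{j+1},\ldots,N_{2j}\leqslant z$ and $N_1\cdots N_{2j}\asymp X$.

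Second, for each such dyadic cell I would partition the $2j$ variables into two groups whose products form $M$ and $N$ with $MN\asymp X$, aiming to match either the hypothesis of Lemma~\ref{S(I)} (with $N$ a single free variable $N_i$ and $M=X/N_i\ll X^{17/11-21\varepsilon}\mathcal{R}^{-1}$) or the hypothesis of Lemma~\ref{S(II)} (with $X^{2/11+8\varepsilon}\ll N\ll X^{16/11-24\varepsilon}\mathcal{R}^{-1}$ and arbitrary coefficients $\ll 1$ on both sides). The split is realized by a greedy procedure: sort the variables in increasing order of size and accumulate them into an $N$-bag until the partial product either enters the Type II window (apply Lemma~\ref{S(II)}) or is pushed past its upper endpoint by a single free variable $N_i$, in which case $N_i$ alone serves as the Type I inner sum and the remainder collapses into the outer product of Type I.

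The main obstacle -- the heart of the proof -- is to verify that this dichotomy exhausts every possible dyadic configuration, and it is precisely at this step that the hypothesis $\gamma_1+\gamma_2>21/11$ is used essentially. Combining the bounds $\mathcal{R}\leqslant 2X^{1+\varepsilon}$ and $\mathcal{R}\geqslant X^{\gamma_1}$ with $\gamma_1>21/22$, one checks that the multiplicative width $X^{14/11-32\varepsilon}\mathcal{R}^{-1}$ of the Type II window comfortably exceeds both the $\mu$-variable bound $z=X^{1/k}$ (so that a greedy jump cannot skip the window via a $\mu$-variable) and the threshold $X^{2/11+8\varepsilon}$, while at the same time any free $N_i$ above $X^{16/11-24\varepsilon}\mathcal{R}^{-1}$ automatically satisfies $X/N_i\ll X^{17/11-21\varepsilon}\mathcal{R}^{-1}$. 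The exponent $21/11$ is exactly the balance point at which the admissible ranges in Lemmas~\ref{S(I)} and \ref{S(II)} cover, up to $X^{O(\varepsilon)}$ slack, all dyadic profiles produced by the Heath--Brown decomposition. Granting this combinatorial check, each cell contributes $\ll X^{\gamma_1+\gamma_2-1-4\varepsilon}$ by the respective lemma, and summing over the $\mathscr{L}^{O(1)}\leqslant X^{\varepsilon}$ dyadic cells yields the claimed bound $T^*(X)\ll X^{\gamma_1+\gamma_2-1-3\varepsilon}$.
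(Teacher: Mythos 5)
Your overall strategy is the same as the paper's: apply Heath--Brown's identity (the paper fixes $k=4$, giving $O(\log^8X)$ octuple sums with the $\mu$-variables bounded by $(2X)^{1/4}$), decompose dyadically, and feed each configuration into Lemma~\ref{S(I)} or Lemma~\ref{S(II)}. The difference is purely in the combinatorial step that you rightly single out as the heart of the matter, and it is precisely there that your argument has a genuine flaw. In your greedy procedure, when the sorted partial product jumps from below $X^{2/11+8\varepsilon}$ to above $X^{16/11-24\varepsilon}\mathcal{R}^{-1}$ upon multiplying by a free variable $N_i$, you conclude that $N_i$ alone can serve as the Type~I inner variable. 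But the jump only forces $N_i$ to exceed the \emph{width} of the window, $X^{14/11-32\varepsilon}\mathcal{R}^{-1}$, not the Type~I threshold $X^{-6/11+21\varepsilon}\mathcal{R}$; since $\mathcal{R}\geqslant X^{\gamma_1}>X^{10/11}$, the width is always strictly smaller than the threshold, so the implication never follows from the jump alone. Concretely, with $\mathcal{R}\asymp X$ the window is roughly $[X^{2/11},X^{5/11}]$ and the Type~I threshold is roughly $X^{5/11}$; a profile such as $(N_1,N_2,N_3)\asymp(X^{0.17},X^{0.37},X^{0.46})$ makes your sorted partial products skip the window ($X^{0.17}\to X^{0.54}$), yet the jumping variable $X^{0.37}$ gives $M=X^{0.63}$, far outside the range $M\ll X^{17/11-21\varepsilon}\mathcal{R}^{-1}\asymp X^{6/11}$ of Lemma~\ref{S(I)}. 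Your supporting claim that ``any free $N_i$ above $X^{16/11-24\varepsilon}\mathcal{R}^{-1}$ automatically satisfies $X/N_i\ll X^{17/11-21\varepsilon}\mathcal{R}^{-1}$'' addresses a different (and stronger) hypothesis than what the greedy jump delivers, and in any case itself needs $\mathcal{R}\ll X^{1-22\varepsilon}$.

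The repair is to reorder the case analysis as the paper does: first dispose of any single variable with $N_j\gg X^{-6/11+21\varepsilon}\mathcal{R}$ by Type~I (such a variable automatically exceeds $X^{1/4}$ because $\gamma_1>21/22$, hence is not a $\mu$-variable); next, if some single variable already lies in the Type~II window $[X^{2/11+8\varepsilon},X^{16/11-24\varepsilon}\mathcal{R}^{-1}]$, apply Lemma~\ref{S(II)} to it alone (this is exactly the case your procedure mishandles); only when every variable is below $X^{2/11+8\varepsilon}$ do you accumulate, and then the product cannot overshoot because each step is smaller than the window's width. With that ordering the three cases are exhaustive (up to the same $X^{O(\varepsilon)}$ bookkeeping at the Case~1/Case~2 boundary that the paper also absorbs into its choice of exponents), and the rest of your argument goes through.
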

\begin{proof}
By Heath--Brown's identity with $k=4$, one can see that $T^*(X)$ can be written as linear combination of
$O(\log^8 X)$ sums, each of which is of the form
\begin{equation}\label{T(N1-N8)}
\mathscr{T}(X)=\sum_{n_1\sim N_1}\cdots\sum_{n_8\sim N_8}\log {n_1}\mu(n_5)\mu(n_6)\mu(n_7)\mu(n_8)e(h_1(n_1\dots n_8)^{\gamma_1}+h_2(n_1\dots n_8)^{\gamma_2}),
\end{equation}
where $X\ll N_1\cdots N_8\ll X$; $ N_i\leqslant (2X)^{1/4}, i=5,6,7,8$ and some $n_i$ may only take value $1$. Therefore, it is sufficient to give upper bound estimate for each $\mathscr{T}(X)$ defined as in (\ref{T(N1-N8)}). Next, we will consider three cases.

\noindent
\textbf{Case 1.} If there exists an $N_j$ such that $N_j\gg X^{-6/11+21\varepsilon}\mathcal{R}>X^{1/4}$, then we must have $1\leqslant j\leqslant 4$. Without loss of generality, we postulate that $N_j=N_1$, and take $m=n_2n_3\cdots n_8,\,\,n=n_1$. Trivially, there holds $m\ll X^{17/11-21\varepsilon}\mathcal{R}^{-1}$. Set
\begin{equation*}
a(m)=\sum_{m=n_2n_3\cdots n_8}\mu(n_5)\cdots\mu(n_8)\ll d_7(m).
\end{equation*}
Then $\mathscr{T}(X)$ is a sum of the form $S_I(M,N)$. By Lemma \ref{S(I)}, we have
\begin{equation*}
X^{-\varepsilon}\mathscr{T}(X)\ll X^{\gamma_1+\gamma_2-1-5\varepsilon}.
\end{equation*}

\noindent
\textbf{Case 2.}
If there exists an $N_j$ such that $X^{{2/11+8\varepsilon}}\ll N_j\ll X^{16/11-24\varepsilon}\mathcal{R}^{-1}$,
then we take
\begin{equation*}
n=n_j,\quad N=N_j,\qquad m=\prod_{i\neq j}n_i,\quad M=\prod_{i\neq j}N_i.
\end{equation*}
Thus, $\mathscr{T}(X)$ is a sum of the form $S_{II}(M,N)$ with $X^{{2/11+8\varepsilon}}\ll N\ll X^{16/11-24\varepsilon}\mathcal{R}^{-1}$. By Lemma \ref{S(II)}, we have
\begin{equation*}
X^{-\varepsilon}\mathscr{T}(X)\ll X^{\gamma_1+\gamma_2-1-5\varepsilon}.
\end{equation*}

\noindent
\textbf{Case 3.} If $N_j\ll X^{2/11+8\varepsilon}(j=1,2,\dots,8)$, without loss of generality, we postulate that $N_1\geqslant N_2\geqslant \dots\geqslant N_8$. Let $\ell$ be the natural number such that
\begin{equation*}
N_1\cdots N_{\ell-1}< X^{2/11+8\varepsilon}, \qquad N_1\cdots N_{\ell}\geqslant X^{2/11+8\varepsilon}.
\end{equation*}
It is easy to check that $2\leqslant \ell\leqslant7$. Then we have
\begin{equation*}
X^{2/11+8\varepsilon}<N_1\cdots N_{\ell}=N_1\cdots N_{\ell-1}N_{\ell}\leqslant \left(X^{2/11+8\varepsilon}\right)^2\leqslant X^{16/11-24\varepsilon}\mathcal{R}^{-1}.
\end{equation*}
In this case, we take
\begin{equation*}
m=\prod_{j=\ell+1}^8 n_j,\quad M=\prod_{j=\ell+1}^8 N_j,\qquad n=\prod_{j=1}^{\ell}n_j,\quad N=\prod_{j=1}^{\ell}n_j.
\end{equation*}
Then $\mathscr{T}(X)$ is a sum of the form $S_{II}(M,N)$. By Lemma \ref{S(II)}, we have
\begin{equation*}
X^{-\varepsilon}\mathscr{T}(X)\ll X^{\gamma_1+\gamma_2-1-5\varepsilon}.
\end{equation*}

\noindent
Combining the above three cases, we derive that
\begin{equation*}
T^*(X)\ll \mathscr{T}(X)\cdot \log^8X \ll X^{\gamma_1+\gamma_2-1-3\varepsilon}.
\end{equation*}
This completes the proof of Lemma \ref{Apply-H-B-identity}.   $\hfill$
\end{proof}

\section{Proof of Theorem 1.1}
For $1/2<\gamma<1$, it is easy to see that
\begin{equation*}
\lfloor-p^{\gamma}\rfloor-\lfloor-(p+1)^{\gamma}\rfloor=
\begin{cases}
     1,  & \textrm{if $p=\lfloor n^{1/\gamma}\rfloor$},\\
     0, & \textrm{otherwise},
   \end{cases}
\end{equation*}
and
\begin{equation}\label{Elementary-Formula}
(p+1)^{\gamma}-p^{\gamma}=\gamma p^{\gamma-1}+O(p^{\gamma-2}).
\end{equation}
Thus, we have
\begin{align*}
\pi(x;\gamma_1,\gamma_2)
&
=\sum_{p\leqslant x}\left(\lfloor-p^{\gamma_1}\rfloor-\lfloor-(p+1)^{\gamma_1}\rfloor\right)\left(\lfloor -p^{\gamma_2}\rfloor-\lfloor-(p+1)^{\gamma_2}\rfloor\right)\\
&
=\mathcal{F}_1(x)+O\left(|\mathcal{F}_2(x)|+|\mathcal{F}_3(x)|+|\mathcal{F}_4(x)|\right),
\end{align*}
where
\begin{align*}
&\mathcal{F}_1(x)=\sum_{p\leqslant x}\left((p+1)^{\gamma_1}-p^{\gamma_1}\right)\left((p+1)^{\gamma_2}-p^{\gamma_2}\right),\\
&\mathcal{F}_2(x)=\sum_{p\leqslant x}\left((p+1)^{\gamma_1}-p^{\gamma_1}\right)\left(\psi\left(-(p+1)^{\gamma_2}\right)-\psi\left(-p^{\gamma_2}\right)\right),\\
&\mathcal{F}_3(x)=\sum_{p\leqslant x}\left(\psi\left(-(p+1)^{\gamma_1}\right)-\psi\left(-p^{\gamma_1}\right)\right)\left((p+1)^{\gamma_2}-p^{\gamma_2}\right),\\
&\mathcal{F}_4(x)=\sum_{p\leqslant x}\left(\psi\left(-(p+1)^{\gamma_1}\right)-\psi\left(-p^{\gamma_1}\right)\right)\left(\psi\left(-(p+1)^{\gamma_2}\right)-\psi\left(-p^{\gamma_2}\right)\right).
\end{align*}
We shall deal with $\mathcal{F}_i(x)(i=1,2,3,4)$ separately. It follows from partial summation, (\ref{Elementary-Formula}) and prime number theorem that
\begin{align*}
\mathcal{F}_1(x)&=\sum_{p\leqslant x}\left(\gamma_1\gamma_2p^{\gamma_1+\gamma_2-2}+O\left(p^{\gamma_1+\gamma_2-3}\right)\right)\\
&=\gamma_1\gamma_2\int_{2}^x\frac{t^{\gamma_1+\gamma_2-2}}{\log t}\mathrm{d}t+O\big(x^{\gamma_1+\gamma_2-1}e^{-c_0\sqrt{\log x}}\big)
\end{align*}
for some $c_0>0$. By (\ref{Elementary-Formula}), partial summation and the arguments of Heath--Brown in \cite{Heath-Brown-1983}, we deduce that
\begin{align*}
\mathcal{F}_2(x)=\sum_{p\leqslant x}\left(\gamma_1p^{\gamma_1-1}+O\left(p^{\gamma_1-2}\right)\right)
\left(\psi\left(-(p+1)^{\gamma_2}\right)-\psi\left(-p^{\gamma_2}\right)\right)
\ll x^{\gamma_1+\gamma_2-1-\varepsilon}.
\end{align*}
Similarly, one also has $\mathcal{F}_3(x)\ll x^{\gamma_1+\gamma_2-1-\varepsilon}$. Now we focus on the upper bound estimate of $\mathcal{F}_4$. It is sufficient to show that, for any $x^{1/2}\ll X\ll x$, there holds
\begin{equation*}
T(X):=\sum_{X<n\leqslant 2X}\Lambda(n)\left(\psi\left(-(n+1)^{\gamma_1}\right)-\psi\left(-n^{\gamma_1}\right)\right)
\left(\psi\left(-(n+1)^{\gamma_2}\right)-\psi\left(-n^{\gamma_2}\right)\right)\ll X^{\gamma_1+\gamma_2-1-\varepsilon}.
\end{equation*}
Let $H_1=X^{1-\gamma_1+\varepsilon},\,\,H_2=X^{1-\gamma_2+\varepsilon}$. From Lemma \ref{Finite-Fourier-expansion}, we have
\begin{equation}\label{psi-psi}
\psi\left(-(n+1)^{\gamma_i}\right)-\psi\left(-n^{\gamma_i}\right)=M_{H_i}(n)+E_{H_i}(n),\qquad (i=1,2),
\end{equation}
where
\begin{align}
& M_{H_i}(n)=-\sum_{1\leqslant |h_i|\leqslant H_i}\frac{e\left(-h_i(n+1)^{\gamma_i}\right)-e\left(-h_in^{\gamma_i}\right)}{2\pi ih_i},\label{MH(n)}\\
& E_{H_i}(n)=O\left(\min\left(1,\frac{1}{H_i\|(n+1)^{\gamma_i}\|}\right)\right)
+O\left(\min\left(1,\frac{1}{H_i\|n^{\gamma_i}\|}\right)\right).\label{EH(n)}
\end{align}
Putting (\ref{psi-psi}) into $T(X)$, we obtain
\begin{align*}
T(X) & =\sum_{X<n\leqslant2X}\Lambda(n)(M_{H_1}(n)+E_{H_1}(n))(M_{H_2}(n)+E_{H_2}(n))\\
 & =:T_1(X)+T_2(X)+T_3(X)+T_4(X),
\end{align*}
where
\begin{align*}
& T_1(X)=\sum_{X<n\leqslant2X}\Lambda(n)M_{H_1}(n)M_{H_2}(n),\qquad T_2(X)=\sum_{X<n\leqslant2X}\Lambda(n)M_{H_1}(n)E_{H_2}(n),\\
&
T_3(X)=\sum_{X<n\leqslant2X}\Lambda(n)E_{H_1}(n)M_{H_2}(n),\qquad T_4(X)=\sum_{X<n\leqslant2X}\Lambda(n)E_{H_1}(n)E_{H_2}(n).
\end{align*}
For each fixed $n$, let
\begin{align*}
& \phi_n(t)=t^{-1}\left(e\left(t(n+1)^{\gamma_1}-tn^{\gamma_1}\right)-1\right),\\
& S_n(t)=\sum_{1\leqslant h\leqslant t}e(hn^{\gamma_1}).
\end{align*}
It is easy to check that
\begin{align*}
\phi_n(t)\ll n^{\gamma_1-1},\qquad\frac{\partial\phi_n(t)}{\partial t}\ll t^{-1}n^{\gamma_1-1},\qquad
 S_n(t)\ll \min\left(t,\frac{1}{\|n^{\gamma_1}\|}\right).
\end{align*}
By partial summation, we obtain
\begin{align}\label{Esti-MH(n)}
|M_{H_1}(n)|
\leqslant & \,\,\left|\sum_{1\leqslant h\leqslant H_1}e\left(hn^{\gamma_1}\right)\phi_n(h)\right|\ll\left|\int_{1}^{H_1}\phi_n(t)\mathrm{d}S_n(t)\right|
                    \nonumber \\
\ll &\,\, \big|\phi_n(H_1)\big|\big|S_n(H_1)\big|+\int_{1}^{H_1}\big|S_n(t)\big|\bigg|\frac{\partial\phi_n(t)}{\partial t}\bigg|\mathrm{d}t
                    \nonumber \\
\ll &\,\,  H_1X^{\gamma_1-1}\log X\cdot\min\left(1,\frac{1}{H_1\|n^{\gamma_1}\|}\right).
\end{align}
By (\ref{EH(n)}), (\ref{Esti-MH(n)})  and  Lemma \ref{k-min-esti} with $k=2$, we obtain
\begin{align*}
T_2(X)+T_3(X)+T_4(X) & \ll \mathscr{L}\sum_{X<n\leqslant 2X}\left(|M_{H_1}(n)E_{H_2}(n)|+|E_{H_1}(n)M_{H_2}(n)|+|E_{H_1}(n)E_{H_2}(n)|\right)\\
& \ll X^{\varepsilon}\sup_{(u_1,u_2)\in[0,1]^2}\sum_{X<n\leqslant2X}\prod_{i=1}^2\min\left(1,\frac{1}{H_i\|(n+u_i)^{\gamma_i}\|}\right)\\
& \ll X^{\varepsilon}\left(\frac{X(\log X)^2}{H_1H_2}+X^{2/3}(\log X)^2\right)\ll X^{\gamma_1+\gamma_2-1-2\varepsilon}+X^{2/3+2\varepsilon/3},
\end{align*}
provided that $\gamma_1+\gamma_2>5/3$. Next, we shall give the upper bound estimate of $T_1(X)$. Define
\begin{align*}
 \Psi_{h,\gamma}(n)=e\left(h(n+1)^{\gamma}-hn^{\gamma}\right)-1,\qquad
 \Psi(n)=\Psi_{h_1,\gamma_1}(n)\Psi_{h_2,\gamma_2}(n).
\end{align*}
It is easy to check that
\begin{align*}
\Psi(n)\ll |h_1h_2|n^{\gamma_1+\gamma_2-2},\qquad\qquad \frac{\partial\Psi(n)}{\partial n}\ll |h_1h_2|n^{\gamma_1+\gamma_2-3}.
\end{align*}
Therefore, by partial summation, (\ref{MH(n)}) and Lemma \ref{Apply-H-B-identity}, we derive that
\begin{align*}
T_1(X)
 & \ll \sum_{1\leqslant|h_1|\leqslant H_1}\sum_{1\leqslant|h_2|\leqslant H_2}\frac{1}{|h_1h_2|}\left|\sum_{X<n\leqslant2X}\Lambda(n)e(h_1n^{\gamma_1}+h_2n^{\gamma_2})\Psi(n)\right|\\
 & \ll \sum_{1\leqslant|h_1|\leqslant H_1}\sum_{1\leqslant|h_2|\leqslant H_2}\frac{1}{|h_1h_2|}\left|\int_{X}^{2X}\Psi(t)\mathrm{d}\Bigg(\sum_{X<n\leqslant t}\Lambda(n)e(h_1n^{\gamma_1}+h_2n^{\gamma_2})\Bigg)\right|\\
 & \ll \sum_{1\leqslant|h_1|\leqslant H_1}\sum_{1\leqslant|h_2|\leqslant H_2}\frac{1}{|h_1h_2|}\big|\Psi(2X)\big|\left|\sum_{X<n\leqslant2X}\Lambda(n)e(h_1n^{\gamma_1}+h_2n^{\gamma_2})\right|\\
 &\quad+\sum_{1\leqslant|h_1|\leqslant H_1}\sum_{1\leqslant|h_2|\leqslant H_2}\int_{X}^{2X}\bigg|\frac{\partial{\Psi(t)}}{\partial t}\bigg|
     \Bigg|\sum_{X<n\leqslant t}\Lambda(n)e(h_1n^{\gamma_1}+h_2n^{\gamma_2})\Bigg|\mathrm{d}t \\
 & \ll X^{\gamma_1+\gamma_2-2}\max_{\substack{1\leqslant|h_1|\leqslant H_1\\1\leqslant|h_2|\leqslant H_2}}\max_{X<t\leqslant2X}\sum_{1\leqslant|h|\leqslant H_1}\sum_{1\leqslant|h|\leqslant H_2}\left|\sum_{X<n\leqslant t}\Lambda(n)e(h_1n^{\gamma_1}+h_2n^{\gamma_2})\right|\\
 & \ll X^{\gamma_1+\gamma_2-1-\varepsilon}.
\end{align*}
This completes the proof of Theorem \ref{Theorem}.

\section*{Acknowledgement}

The authors would like to appreciate the referee for his/her patience in refereeing this paper.
This work is supported by the National Natural Science Foundation of China (Grant Nos. 11971476, 11901566, 12001047, 12071238).

\end{document}